\newtheorem{defn0}{Definition}[subsection]
\theoremstyle{definition}
\newtheorem{example0}[definition]{Example}
\newcommand{\NN}{{\mathbb N}}
\newcommand{\ZZ}{{\mathbb Z}}
\newcommand{\RR}{{\mathbb R}}
\newcommand{\CC}{{\mathbb C}}
\newcommand{\base}{\mathbf{a}}
\newcommand{\w}{\mathbf{w}}
\def\inv{\operatorname{Inv}}
\newcommand{\type}{\operatorname{type}}
\definecolor{limegreen}{rgb}{0.2, 0.8, 0.2}
\definecolor{amethyst}{rgb}{0.6, 0.4, 0.8}
\definecolor{deeppink}{rgb}{1.0, 0.08, 0.58}
\newcommand{\red}{\textcolor{red}}
\newcommand{\blue}{\textcolor{blue}}
\begin{document}

\title*{Alcove Walks and GKM Theory for Affine Flags}
\author{Elizabeth Mili\'cevi\'c and Kaisa Taipale}
\institute{Elizabeth Mili\'cevi\'c, \email{emilicevic@haverford.edu}
\and Kaisa Taipale, \email{taipale@umn.edu}}
%
%
\maketitle

\abstract{We develop the GKM theory for the torus-equivariant cohomology of the affine flag variety using the combinatorics of alcove walks. Dual to the usual GKM setup, which depicts the orbits of the small torus action on a graph, alcove walks take place in tessellations of Euclidean space. Walks in affine rank two occur on triangulations of the plane, providing a more direct connection to splines used for approximating surfaces. Alcove walks in GKM theory also need not be minimal length, and can instead be randomly generated, giving rise to more flexible implementation. This work reinterprets and recovers classical results in GKM theory on the affine flag variety, generalizing them to both non-minimal and folded alcove walks, all motivated by applications to splines.}

\section{Introduction}
\label{sec:intro}

GKM theory is a powerful method for doing cohomological calculations on a vast array of topological spaces, by recording a strikingly small amount of information.  When a group such as an algebraic torus acts suitably on a space like an algebraic variety, the fixed points and one-dimensional orbits become the vertices and edges of a collection of graphs which fully encode the structure of the corresponding equivariant cohomology ring.  This  approach is due to Goresky, Kottwitz, and MacPherson for certain algebraic varieties \cite{GKM}, was developed in the Kac-Moody setting by Kostant and Kumar \cite{KostantKumar}, and further generalized by Henriques, Harada, and Holm \cite{HHH}; see also the survey on GKM theory by Tymoczko \cite{TymGKM}.

\begin{figure}
	\centering
	\includegraphics[scale=0.23]{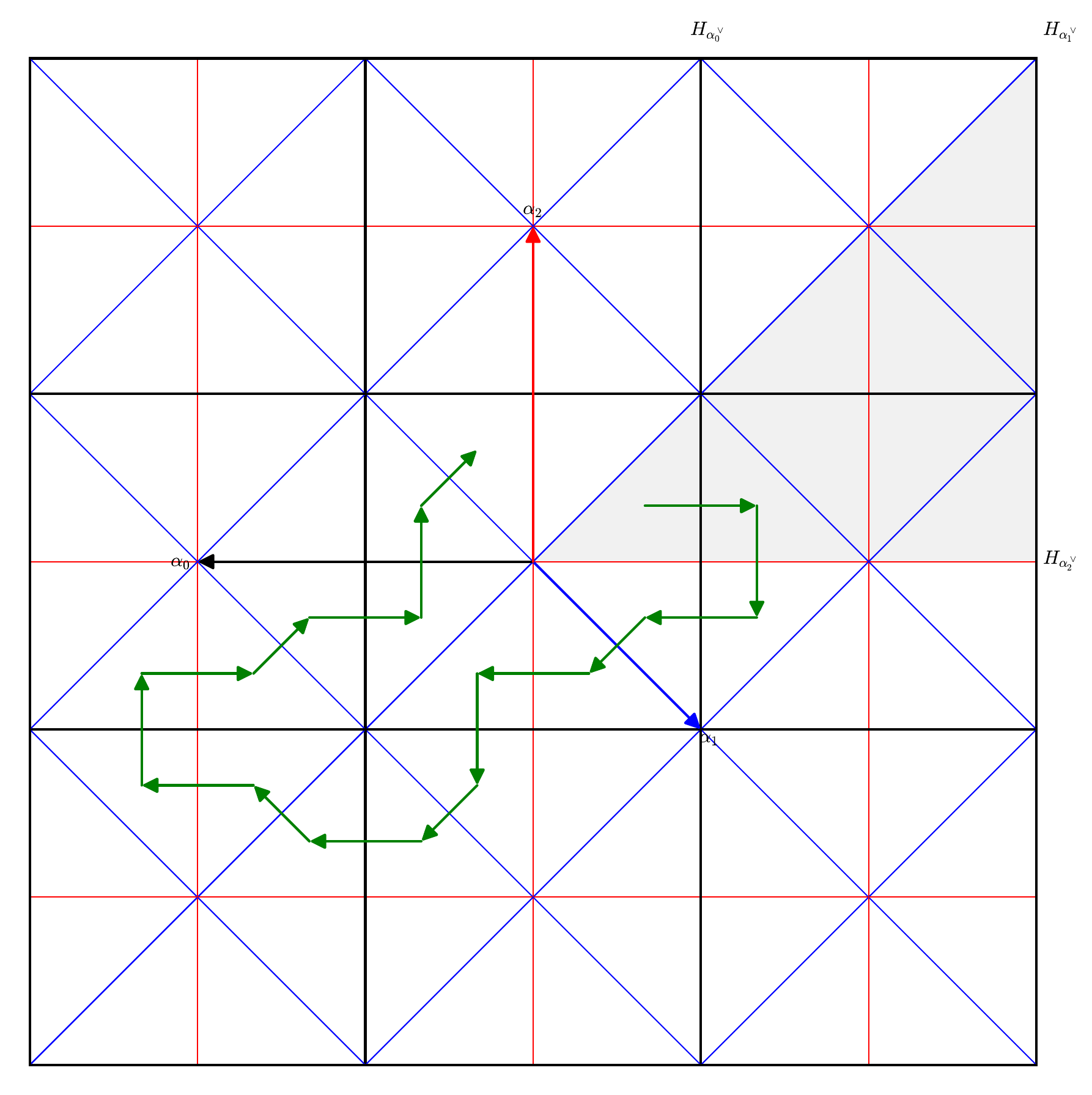}
	\caption{An alcove walk in affine type $C_2$ with 16 steps shown in green, not minimal length.}
	\label{fig:C2walk}
\end{figure}

Exciting applications of GKM theory to approximation theory have been emerging, pioneered by Gilbert, Tymoczko, and Viel \cite{gilbert2016generalized}, building on foundational work linking commutative algebra to multivariate splines by Billera and Rose in the simplicial case \cite{billera1989grobner}, with refinements by Schenck and Stillman \cite{ schenck1997spectral, schenck1997local}.  The article \cite{gilbert2016generalized} also lays out the extensive intellectual history of the interplay among GKM theory, approximation theory, splines, algebra, and combinatorics.

Splines provide piecewise polynomial approximations for smooth surfaces in 3D space, supported over a triangular or quadrilateral mesh; see the textbook by Lai and Schumaker for an introduction \cite{lai2007spline}. 
The methods of GKM theory are typically represented on the dual spaces to triangulations, however, since passing to the cohomological framework reverses high and low-dimensional features.

While there is a burgeoning literature on connections between GKM theory and the resulting topological approach to splines theory (e.g.~rediscovering splines as equivariant cohomology rings of toric varieties), there is currently no literature linking alcove walks to splines.  This paper reinterprets GKM theory in terms of alcove walks like the one depicted in Figure \ref{fig:C2walk}, preserving the picture of triangulations of the plane as describing certain smooth surfaces supported above these alcoves.  This geometric description, exploiting the triangulations of the plane naturally arising in the GKM approach to affine flag varieties, can thus perhaps be a more helpful representation for certain splines in approximation theory.

\subsection{Summary of results}

We begin in Section \ref{sec:alcoves} by reviewing the affine hyperplane arrangement, whose complement is a collection of simplices called alcoves, which tessellate Euclidean space.  We represent alcove walks as a sequence of concatenated arrows between pairs of adjacent alcoves, as illustrated in green in Figure \ref{fig:C2walk}.  Each step crosses exactly one hyperplane, which we index by the corresponding affine root, signed according to whether the step moves closer or farther from the start.  

Each alcove corresponds to a unique element of the affine Weyl group $W_{af}$.  Any alcove walk $\gamma$ is thus naturally associated to the element $w \in W_{af}$ indexing the final alcove $\w$, as well as a choice of (not necessarily reduced) expression for $w$ in terms of the Coxeter generators labeling each step.  A mask $\varepsilon$ on an alcove walk $\gamma$ is a binary vector with the same number of entries as steps in $\gamma$.  Each entry of $\varepsilon$ is viewed as showing or hiding the corresponding step in $\gamma$, naturally forming an expression for an element $v \in W_{af}$ which is below $w$ in the affine Bruhat order.  

In Section \ref{sec:polys}, we introduce a new polynomial algebra on alcove walks, by assigning a particular product of affine roots to each mask. When summing over all masks for a given $v \leq w$, we show in Proposition \ref{prop:Indep} that the resulting polynomial is independent of the initial choice of alcove walk to $\w$, including that $\gamma$ need not be minimal.  This analysis gives rise to a family of functions $\psi^v$ whose value at $w$ records the hyperplane crossings shown by any mask for $v$ in a fixed walk to $\w$.

We then provide a brief review of GKM theory for affine flag varieties and the localization map in Section \ref{sec:GKMresults}.  The inclusion of the affine torus-fixed points into the affine flag variety induces an injection on the equivariant cohomology into a direct sum of polynomial rings.  Moreover, the image is characterized by a simple divisibility relation called the GKM condition.  In Theorem \ref{thm:GKM}, we prove that the functions $\psi^v$ satisfy the GKM condition, placing them in the image of this localization map.  Theorem \ref{thm:Local} then recognizes $\psi^v$ as the affine Schubert class $[X_v]$, identifying these alcove walk functions  with the most natural additive basis for the equivariant cohomology ring.  These results illustrate the advantage of working with alcove walks in GKM theory, since the walk simply ``knows'' which steps are extraneous, automatically recovering the classical localization formulas without an additional minimality hypothesis.

Alcove walks can also be folded according to rules governed by assigning an orientation to the affine hyperplane arrangement.  We develop these folded alcove walks in Section \ref{sec:folded}.  We prove a bijection in Theorem \ref{thm:AlcoveWalks} between positively folded alcove walks labeled by elements of $\CC$ and the intersection of two affine Schubert cells.  We conclude the paper by discussing the relationship between positively folded alcove walks and masks for alcove walks, equivalently summands of the function values $\psi^v(w)$, which govern the GKM theory of the affine flag variety.  Inspiring our use of the terminology of masks, the connection to Kazhdan-Lusztig polynomials can be made explicit by interpreting the recursion on $R$-polynomials as determined by folded alcove walks, labeled by elements of the finite field of order $q$.

\subsection*{Acknowledgements} 

This work was heavily influeced by countless related conversations with Arun Ram, Petra Schwer, and Anne Thomas, on many variations and generalizations of the alcove walks considered in this paper.  All of our figures were produced in Sage, and we thank the Sage combinatorics developers who have implemented useful features of alcove walks which we are pleased to illustrate \cite{SageMath, Sage-combinat}.  We also gratefully acknowledge the close reading of several anonymous referees who contributed to many improvements throughout the paper.

The authors  gratefully acknowledge a variety of funding sources and institutions, which provided excellent opportunities for the authors to discuss the project.
EM was partially supported by NSF Grants DMS-1600982 and DMS-2202017.  EM and KT gratefully acknowledge the support of the Max-Planck-Institut f\"ur Mathematik, which hosted two long-term sabbatical visits by EM, in addition to a collaborative visit by KT, jointly funded by EM's Simons Collaboration Grant  318716.  EM and KT also spent two weeks at the Institute for Advanced Study in the Summer Collaborators Program.

\section{Alcove Geometry}\label{sec:alcoves}
\label{sec:algebra}

In this section, we begin by reviewing all necessary terminology of affine Weyl groups and affine root systems.  We then develop the affine hyperplane arragnement and the machinery of alcove walks.  The notion of masks on expressions for affine Weyl group elements, equivalently on alcove walks, plays a central role.

\subsection{Affine Weyl groups and root systems}

We begin with a brief summary of some basic terminology and notation for affine Weyl groups and affine root systems.  For more details, we refer the reader to expository references such as \cite[Chapter 1]{Kumar} or \cite[Chapter 4]{kSchur}.

Let $(I_{af},A_{af})$ be an \emph{affine Cartan datum}, where $I_{af}$ is an indexing set such that $|I_{af}| = n$, and $A_{af} = (a_{ij})_{i,j \in I_{af}}$ is a generalized Cartan matrix, having 2's on the diagonal and otherwise nonpositive entries such that $a_{ij}\neq 0 \iff a_{ji}\neq 0$. Since $(I_{af},A_{af})$ is affine, then the matrix $A_{af}$ has corank 1, and the restriction of $A_{af}$ to $J \times J$ for any subset $J \subsetneq I_{af}$ yields a positive definite matrix.
For any $i,j \in I_{af}$, define an integer $m_{ij} = \{2,3,4,6\}$ if and only if $a_{ij}a_{ji} = \{0,1,2,3\}$, respectively, noting that these are the only possible values in the affine setting.

The \emph{affine Weyl group} associated to the affine Cartan datum $(I_{af},A_{af})$ is the group $W_{af}$ generated by $\{ s_i \mid i \in I_{af} \}$, subject to the relations $s_i^2 = 1$ and $(s_is_j)^{m_{ij}} = 1$.  The \emph{length} $\ell(w)$ of $w \in W_{af}$ is the minimum $m \in \NN \cup \{0\}$ such that $w = s_{i_1}s_{i_2} \cdots s_{i_m}$ for indices $i_j \in I_{af}$. In figures, we often use the abbreviated notation $w = s_{i_1i_2 \cdots i_m}$ for an \emph{expression} for $w$.  A factorization of $w \in W_{af}$ using exactly $\ell(w)$ simple reflections is called a \emph{reduced} expression for $w$.  The \emph{Bruhat order} on $W_{af}$ is defined by $v \leq w$ if and only if some (equivalently every) reduced expression for $w$ contains a subexpression which is a reduced expression for $v$. 

Fix a (not necessisarily reduced) expression $w = s_{i_1}s_{i_2} \cdots s_{i_m}$. Every binary vector $\varepsilon = (\varepsilon_1, \dots, \varepsilon_m) \in \{0,1\}^m$ then corresponds to a subexpression of $w$, which we denote by $w^\varepsilon = s_{i_1}^{\varepsilon_1}s_{i_2}^{\varepsilon_2} \cdots s_{i_m}^{\varepsilon_m}$. Borrowing terminology from Kazhdan-Lusztig polynomials \cite{BilleyWarrington}, we refer to such a vector $\varepsilon$ as a \emph{mask} associated to the expression $w = s_{i_1}\cdots s_{i_m}$, or simply a \emph{mask on $w$} when the expression is understood. If entry $\varepsilon_\ell = 1$, we say that the mask $\varepsilon$ \emph{shows} the generator $s_{i_\ell}$, and conversely if $\varepsilon_\ell = 0$, then $\varepsilon$ \emph{hides} $s_{i_\ell}$. The total number of 1's in a mask is called the \emph{support} of $\varepsilon$, which we denote by $|\varepsilon|$. Given $v \leq w$, we say that $\varepsilon$ is a \emph{mask for $v$} if $w^\varepsilon = v$. For any subinterval of $\{1, \dots, m\}$, say $J_k= \{j, j+1, \dots, j+k-1\}$ of length $k$ starting at $j$, we can also consider the \emph{submask} $\varepsilon_{J_k} = (\varepsilon_j, \dots, \varepsilon_{j+k-1}) \in \{0,1\}^k$, which corresponds to the subexpression $w^{\varepsilon_{J_k}}$ of $w$.  

 Denote by $(X_{af},X_{af}^{\vee}, \Delta_{af}, \Delta_{af}^\vee)$ the \emph{affine root datum} associated to $(I_{af},A_{af})$, where $X_{af}$ is a free $\ZZ$-module of rank $n$, and $X_{af}^\vee = \operatorname{Hom}(X_{af},\ZZ)$ is the dual lattice under the evaluation pairing $\langle \cdot , \cdot \rangle : X_{af}^{\vee} \times X_{af} \to \ZZ$.  The bases $\Delta_{af} = \{ \alpha_i \mid i \in I_{af} \}$ and $\Delta_{af}^\vee = \{ \alpha^\vee_i  \mid i \in I_{af}\}$ of positive simple affine roots and coroots, respectively, satisfy $\langle \alpha_i^\vee, \alpha_j \rangle = a_{ij}$ for any $i, j \in I_{af}$.  Fixing these bases, $X_{af} = \bigoplus_{i \in I_{af}} \ZZ \alpha_i$ is the \emph{affine root lattice} and $X_{af}^\vee =\bigoplus_{i \in I_{af}} \ZZ \alpha^\vee_i$ is the \emph{affine coroot lattice}. 
  The affine Weyl group acts on the affine root and coroot lattices by 
  \begin{equation}\label{eq:Waction}
  s_i \cdot \mu =  \mu - \langle \alpha^\vee_i, \mu \rangle \alpha_i  \quad\  \text{and} \quad\ 
  s_i \cdot \lambda = \lambda - \langle \lambda, \alpha_i \rangle \alpha_i^\vee
  \end{equation}
  for any $\mu \in X_{af}$ and $\lambda \in X_{af}^\vee$.  Restricting the action of $W_{af}$ on $X_{af}^\vee$ to the set $\Delta_{af}^\vee$, we obtain the set of real \emph{affine roots} $R_{af} = \{ w\cdot \alpha_i \mid w \in W_{af}, \; i \in I_{af}\}$.  The \emph{positive affine roots} are then $R_{af}^+ = R_{af} \cap \bigoplus_{i \in I_{af}} \ZZ_{\geq 0} \alpha_i$, and we write $R_{af} = R^+_{af} \sqcup R^-_{af}$.  Therefore, given any $\beta \in R_{af}$, either $\beta \in R^+_{af}$ or $-\beta \in R^+_{af}$, so we denote by $\beta^+$ the unique root in $\{\pm \beta\}$ which lies in $R^+_{af}$, and similarly by $\beta^-$ the one of $\pm \beta$ in $R^-_{af}$.

 Let $(I_0,A_0)$ be the finite Cartan datum obtained by restricting $A_{af}$ to $I_0\times I_0$, where $I_0 = I_{af}-\{0\}$. The finite Weyl group $W$ associated to $(I_0,A_0)$ is the subgroup of $W_{af}$ generated by $\{s_i \mid i \in I_0 \}$, subject to the same relations. The associated finite root datum $(X,X^\vee, \Delta, \Delta^\vee)$ consists of positive simple roots $\Delta = \Delta_{af}-\{\alpha_0\}$ and coroots $\Delta^\vee = \Delta^\vee_{af}-\{\alpha^\vee_0\}$, with root and coroot lattices $X =\bigoplus_{i \in I_0} \ZZ \alpha_i$ and $X^\vee =\bigoplus_{i \in I_0} \ZZ \alpha^\vee_i$, respectively.  The action of $W$ on $X$ and $X^\vee$ is also defined by  \eqref{eq:Waction}. The set of finite roots is given by $R = \{ w\cdot \alpha_i \mid w \in W, i \in I_0\}$. The positive finite roots are $R^+ = R \cap \bigoplus_{i \in I_0} \ZZ_{\geq 0} \alpha_i$, and we write $R = R^+ \sqcup R^-$. Let $\theta$ denote the highest root of $R^+$.
 Defining the \emph{null root} $\delta = \alpha_0 + \theta$, we then have
\begin{align*}
R_{af} &= \{ \alpha + k \delta \mid \alpha \in R,\ k \in\ZZ\} \nonumber \\
R^+_{af} &=  \{ \alpha + k \delta \mid \alpha \in R,\ k \in\ZZ_{>0}\} \cup R^+ \\
R^-_{af} & = R_{af} \backslash R^+_{af}. \nonumber
\end{align*}
We will usually express affine roots in this way, as a sum of a finite root $\alpha \in R$ and an \emph{imaginary root} of the form $k \delta$.

\subsection{Hyperplanes and alcoves}

We continue by defining a hyperplane arrangement associated to the affine root system, and we review the terminology of alcove walks introduced in \cite{Ram}. 

 Denote by $V^* = X^\vee \otimes{_\ZZ} \RR \cong \RR^{n-1}$. By definition, the generator $s_i\in W$ fixes the hyperplane $H_{\alpha_i} = \{ v \in V^* \mid \langle v, \alpha_i \rangle = 0 \}$, and the origin in $V^*$ is the intersection $\cap_{i\in I_0} H_{\alpha_i}$.  More generally, given any finite root $\alpha \in R$ and any integer $k \in \ZZ$, define a \emph{hyperplane} in $V^*$ by
$H_{\alpha+k\delta} = \{ v \in V^* \mid \langle v, \alpha \rangle = k\}.$
The hyperplanes for affine type $A_2$ are labeled in Figure \ref{fig:20step}.  The family of hyperplanes orthogonal to $\alpha_1^\vee$  (resp.~$\alpha_2^\vee$) are labeled in blue (resp.~red) in Figure \ref{fig:20step}.  We have chosen to label all hyperplanes in Figure \ref{fig:20step} with positive roots in $R^+_{af}$.

\begin{figure}
	\centering
		\begin{overpic}[scale = .35]{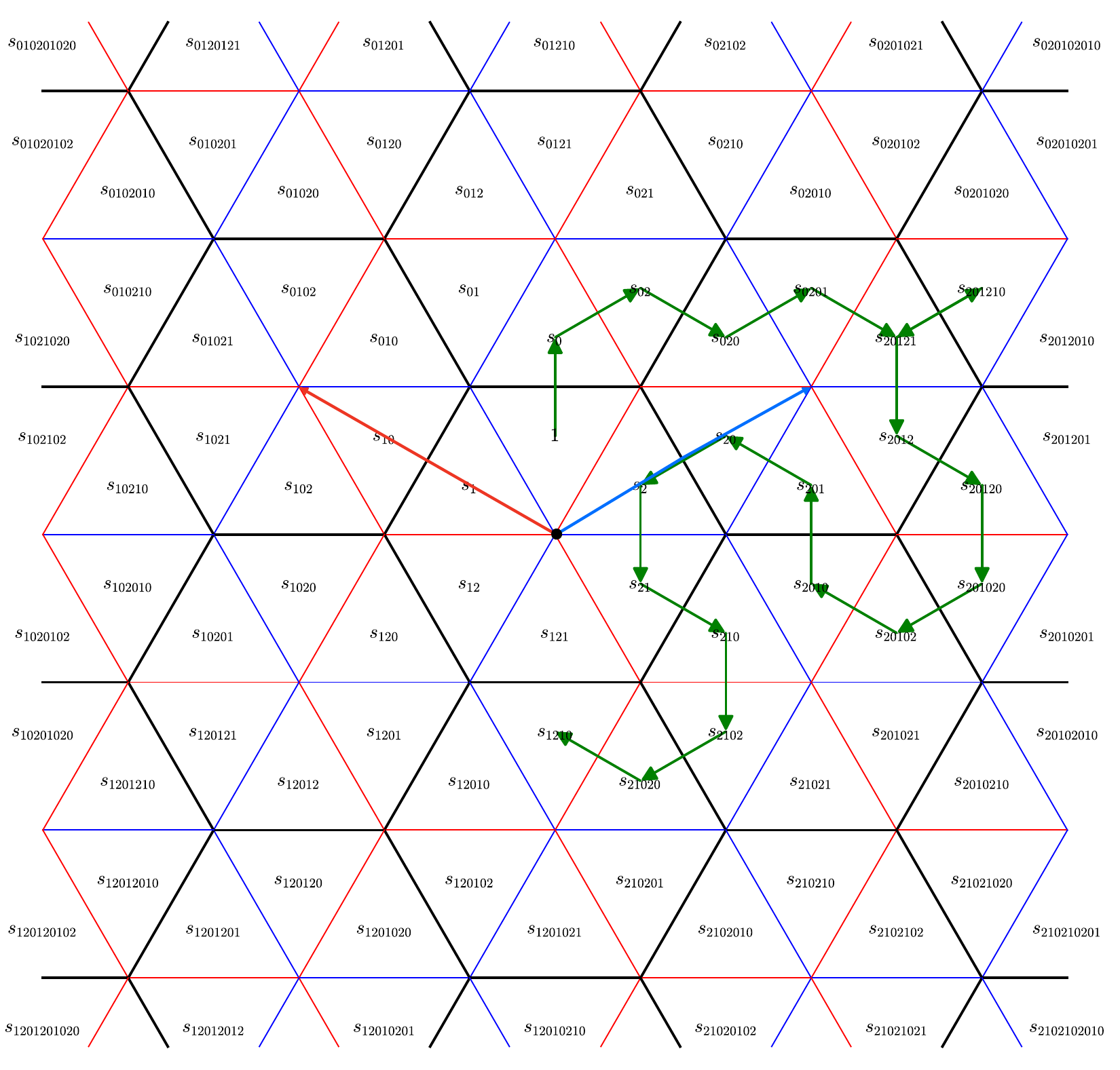}
	\put(91,88){\parbox{10cm}{$H_{-\alpha_1 - \alpha_2 + 2\delta}$}}
	\put(91,74.5){\parbox{10cm}{$H_{-\alpha_1 - \alpha_2 + 2\delta}$}}
	\put(91,61.5){\parbox{10cm}{$H_{-\alpha_1 - \alpha_2 + \delta}$}}
	\put(91,48.5){\parbox{10cm}{$H_{\alpha_1 + \alpha_2 }$}}
	\put(91,35.5){\parbox{10cm}{$H_{\alpha_1 + \alpha_2 + \delta}$}}
	\put(91,22.5){\parbox{10cm}{$H_{\alpha_1 + \alpha_2 + 2\delta}$}}
	\put(91,9){\parbox{10cm}{$H_{\alpha_1 + \alpha_2 + 3\delta}$}}

        \put(14,1){\parbox{10cm}{$\blue{H_{\alpha_1 +4\delta} }$}}
	\put(29,1){\parbox{10cm}{$\blue{H_{\alpha_1 +3\delta} }$}}
	\put(44.5,1){\parbox{10cm}{$\blue{H_{\alpha_1 +2\delta} }$}}	
	\put(60,1){\parbox{10cm}{$\blue{H_{\alpha_1 +\delta} }$}}
	\put(75,1){\parbox{10cm}{$\blue{H_{\alpha_1} }$}}
	\put(90,1){\parbox{10cm}{$\blue{H_{-\alpha_1 +\delta} }$}}
	
	  \put(2.5,88.5){\parbox{10cm}{$\red{H_{-\alpha_2 +4\delta} }$}}
	   \put(17.5,88.5){\parbox{10cm}{$\red{H_{-\alpha_2 +3\delta} }$}}
	   \put(32.5,88.5){\parbox{10cm}{$\red{H_{-\alpha_2 +2\delta} }$}}
	    \put(49,88.5){\parbox{10cm}{$\red{H_{-\alpha_2 +\delta} }$}}
	     \put(67.25,88.5){\parbox{10cm}{$\red{H_{\alpha_2} }$}}
	      \put(81,88.5){\parbox{10cm}{$\red{H_{\alpha_2 + \delta} }$}}
	      
	   \put(35,58){\parbox{10cm}{$\red{\alpha_2^\vee} $}}
	    \put(60.5,58){\parbox{10cm}{$\blue{\alpha_1^\vee} $}}
	   	\end{overpic}
	\caption{A non-minimal alcove walk ending in $\w$ where $w=s_{1210}$, having type $s_{02012001020201010202}$.}
	\label{fig:20step}
\end{figure}

 The reflection across $H_{\alpha+k\delta}$ defined by 
$ s_{\alpha, k}(v) = v - (\langle v, \alpha\rangle - k)\alpha^\vee$
 is an affine transformation of $V^*$, and 
the reflections $s_{\alpha, k}$ generate the affine Weyl group $W_{af}$.  The action of $W_{af}$ on $V^*$ preserves the collection of hyperplanes $H_{\alpha+k\delta}$. 
The reflection $s_{\alpha, k}$ associated to the affine root $\beta = \alpha+k\delta \in R_{af}$ can equivalently be expressed as the composition $s_\beta = s_\alpha t_{k\alpha^\vee}$, where $t_{k\alpha^\vee}$ denotes the translation in $V^*$ by the vector $k\alpha^\vee$, and $s_{\alpha}$ is the reflection across the hyperplane $H_{\alpha}$. Given any $\mu+m\delta \in R_{af}$, we have $t_{k\alpha^\vee}(\mu + m\delta) = (\mu+m\delta) - \langle k\alpha^\vee, \mu \rangle \delta,$ and therefore 
\begin{equation}\label{eq:affonaff}
s_{\alpha+k\delta}(\mu+m\delta) = \mu - \langle \alpha^\vee, \mu \rangle \alpha + \left( m- \langle k\alpha^\vee, \mu \rangle\right) \delta.
\end{equation}

 The complement in $V^*$ of the connected components of the hyperplanes $H_{\alpha+k\delta}$ are \emph{open alcoves}, and their closures in $V^*$ are simply called \emph{alcoves}. The affine Weyl group $W_{af}$ acts simply transitively on the set of alcoves in $V^*$. The base alcove $\base_0$ is defined as
$\base_0 = \{ v \in V^* \mid 0 \leq \langle v, \alpha \rangle \leq 1\ \text{for all}\ \alpha \in R^+ \};$ this is the alcove labeled by 1 in Figure \ref{fig:20step}.
Under the action of $W_{af}$ on $V^*$, the element $w \in W_{af}$ corresponds bijectively to the alcove $\w = w\base_0$, and we henceforth identify the elements of $W_{af}$ with their alcoves.  The codimension one faces of an alcove are called \emph{panels}.

 An \emph{alcove walk} is a sequence of alcoves $\gamma = (\base_0, \base_1,  \dots, \base_m)$  such that each successive pair of alcoves shares a panel.  If the final alcove $\base_m = w\base_0$ for $w \in W$, then we say that $\gamma$ is an alcove walk \emph{from} $\base_0$ \emph{to} $\w$ of \emph{length} $m$. See Figure \ref{fig:20step} for an example of an alcove walk from $\base_0$ to $\w$ for $w=s_{1210}$ of length 20, illustrated as a sequence of green arrows. Until we consider folded alcove walks in Section \ref{sec:folded}, we assume that  $\base_{j-1} \neq \base_j$ for all $j \in \{1, \dots, m\}$. There thus exists a unique $i \in I_{af}$ and $v \in W_{af}$ such that $\base_{j-1} = v\base_0$ and $\base_{j} = vs_i\base_0$.  The common panel between $\base_{j-1}$ and $\base_{j}$ is then said to have \emph{type} $i \in I_{af}$.  
 The \emph{type of the alcove walk} $\gamma$ is the word in $W_{af}$ defined by $\operatorname{type}(\gamma) = s_{i_1}s_{i_2} \cdots s_{i_m}$, where for each $1 \leq j \leq m$, the panel common to $\base_{j-1}$ and $\base_{j}$ has type $i_j \in I_{af}$.  The type of the alcove walk $\gamma$ corresponds to an expression for the element $w = s_{i_1} \cdots s_{i_m}$ such that $\base_m = w\base_0$.  If the length of the alcove walk equals $\ell(w)$, then the corresponding expression for $w$ is reduced, and we say that the alcove walk has \emph{minimal length}, equivalently that $\gamma$ is minimal.  The alcove walk in Figure \ref{fig:20step} is not minimal, and its type is recorded in the caption.
 
 Conversely, given any expression for $w = s_{i_1} \cdots s_{i_m} \in W_{af}$, we can define an associated alcove walk $\gamma_w = (\base_0, \base_1, \dots, \base_m)$
from the base alcove $\base_0$ to $\w = \base_m$ by setting $\base_j = s_{i_{1}}\cdots s_{i_{j}}\base_0$ for all $1 \leq j \leq m$.  In other words, $\gamma_w$ is the unique alcove walk such that $\type(\gamma_w) = s_{i_1}\cdots s_{i_m}$.  We also refer to an \emph{alcove walk $\gamma$ of type} $\w$, leaving implicit the choice of expression for $w$ determined by $\type(\gamma)$, since all relevant constructions will be shown to be independent of this choice.
 
 The \emph{inversion set of $w \in W_{af}$} is defined as
$\inv(w) = \left\{ \beta \in R_{af}^+ \ \middle|\  w^{-1}\beta \notin R_{af}^+ \right\}.$
The inversions of $w$ can also be written explicitly in terms of any chosen reduced expression $w = s_{i_1}\cdots s_{i_\ell}$; namely, $\inv(w) = \{ \beta _j \}$ where
\begin{equation}\label{E:betaj}
\beta_1 = \alpha_{i_1}, \quad \beta_2 = s_{i_1}\alpha_{i_2}, \quad \dots, \quad \beta_\ell = s_{i_1} \cdots s_{i_{\ell -1}} \alpha_{i_\ell}.
\end{equation}
In other words, the elements of $\inv(w)$ coincide with the roots $\beta \in R^+_{af}$ corresponding to the hyperplanes crossed at each step of the alcove walk $\gamma_w$ of type $w = s_{i_1}\cdots s_{i_\ell}$.
 
 We represent alcove walks as a sequence of concatenated arrows from $\base_{j-1}$ to $\base_{j}$, orthogonal to the hyperplane, say $H_{\beta_j+k_j\delta}$, containing the panel shared by $\base_{j-1} = v\base_0$ and $\base_{j} = vs_i\base_0$.  We say that the alcove walk \emph{crosses} hyperplane $H_{\beta_j+k_j\delta}$ at \emph{step} $j$, and we denote this crossing by a single arrow $\base_{j-1} \to \base_j$.   If $\ell(v) < \ell(vs_i)$, we index the crossing from $\base_{j-1}$ to $\base_{j}$ by the corresponding positive affine root $\left(\beta_{j}+k_{j}\delta\right)^+ \in R^+_{af}$.  Conversely, if $\ell(vs_i)<\ell(v)$, we index the crossing at step $j$ by the corresponding negative affine root $\left(\beta_{j}+k_{j}\delta\right)^- \in R^-_{af}$.  
 If $\ell(v)<\ell(vs_i)$, then the hyperplane crossing at step $j$ moves ``away'' from the base alcove $\base_0$, and we refer to this crossing as a \emph{forward step}.  Conversely, if $\ell(vs_i) < \ell(v)$, then step $j$ moves ``toward'' $\base_0$, and we refer to this crossing as a \emph{backward step}. For example, the fifth step of the alcove walk in Figure \ref{fig:20step} is a forward step indexed by $\alpha_2+ \delta \in R^+_{af}$, whereas the eighth step is backward and thus indexed by $\alpha_1+\alpha_2-\delta \in R^-_{af}$.

 The language of masks now extends to alcove walks as follows.  Let $\gamma = (\base_0, \base_1, \dots, \base_m)$ be an alcove walk of $\type(\gamma) = s_{i_1}\cdots s_{i_m} = w \in W_{af}$.  A mask $\varepsilon \in \{0,1\}^m$ picks out certain crossings in $\gamma$; namely, if $\varepsilon_j = 1$ the mask shows the crossing $\base_{j-1} \to \base_j$ at step $j$, and if $\varepsilon = 0$ the mask hides the crossing at step $j$. Note that the length of the walk $\gamma$ coincides with the length of the mask $\varepsilon$, and so we omit reference to the ambient space $\varepsilon \in \{0,1\}^m$ when the length of $\gamma$ is clear.  For any subinterval $J_k = \{j, \dots, j+k-1\}$ of indices in $\{1, \dots, m\}$, the submask $\varepsilon_{J_k} = (\varepsilon_j, \dots, \varepsilon_{j+k-1})$ corresponds to the subsequence $\sigma = (\base_j, \dots, \base_{j+k-1})$ of $\gamma$, which can be viewed as an alcove walk instead starting in alcove $\base_j$ and ending in $\base_{j+k-1}$. Given a subsequence $\sigma$ of $\gamma$, denote by $\varepsilon_\sigma$ the subsequence of $\varepsilon$ which shows or hides only those crossings in $\sigma$. The type of the subsequence $\sigma$  is given by the subexpression $w^{\varepsilon_\sigma} = s_{i_{j}}^{\varepsilon_j}\cdots s_{i_{j+k-1}}^{\varepsilon_{j+k-1}}$ corresponding to panels crossed by $\sigma$.

\section{Alcove Walks and Affine Root Algebra}\label{sec:polys}

This section introduces a new polynomial algebra on alcove walks, by assigning a particular product of affine roots to each mask for a fixed alcove walk to $\w$.  When summing over all masks of minimum support for a given subword of $w$, the resulting polynomial is shown in Proposition \ref{prop:Indep} to be independent of the initial choice of walk, including that this walk need not be minimal.  This analysis gives rise to a family of polynomial functions $\psi^v$ for any $v \in W_{af}$, whose value $\psi^v(w)$ fixes a walk to $\w$ and records the hyperplane crossings which are shown by any mask for $v$; see Definition \ref{def:psidef} for the precise formula.

\subsection{Products of affine roots from alcove walks}

In this section, we define an element of $\ZZ[\alpha_0,\alpha_1, \dots, \alpha_{n-1}]$ corresponding to each mask for a given alcove walk.  We record these polynomials as products of affine roots of the form $\alpha+k\delta$ via the relation $\delta = \alpha_0+\theta$.

   \begin{definition}\label{def:Psi-gamma}
  Let  $\gamma = (\base_0, \base_1, \dots, \base_m)$ be an alcove walk of
$\operatorname{type}(\gamma) = s_{i_1}s_{i_2} \cdots s_{i_m}$.
 To each mask $\varepsilon = (\varepsilon_1, \dots, \varepsilon_m) \in \{0,1\}^m$,
  we associate the product of affine roots
\begin{equation}\label{eq:Psi-gamma}
\Psi_\gamma^{\varepsilon} = \prod\limits_{ \varepsilon_j=1 }\left(\beta_{i_j}+k_{i_j}\delta\right)^{\pm}.
\end{equation}
The multiplicands in \eqref{eq:Psi-gamma} are those roots indexing the crossings of $\gamma$ at steps corresponding to the nonzero entries in $\varepsilon$. The $\pm$ sign serves as a reminder that if the step is forward (resp.~backward), then we choose the corresponding positive (resp.~negative) affine root indexing the hyperplane crossed at each step. If a mask $\varepsilon$ hides all crossings, then we set $\Psi^\varepsilon_\gamma = 1$. 
\end{definition}

\begin{figure}
	\centering
	\begin{overpic}[scale = 0.275]{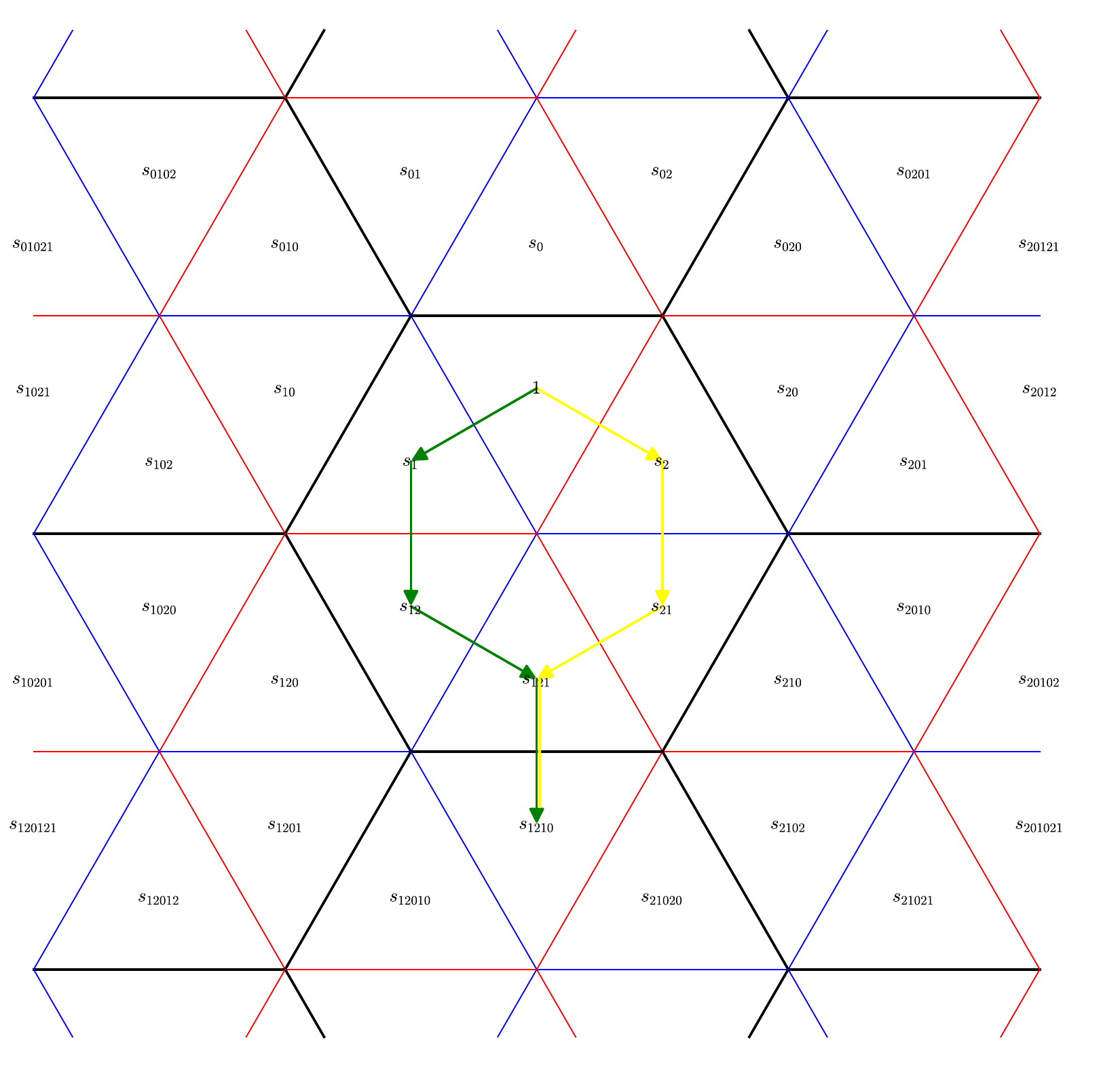}
	\put(93,46){\parbox{10cm}{$H_{\alpha_1 + \alpha_2 }$}}
	\put(93,27){\parbox{10cm}{$H_{\alpha_1 + \alpha_2 + \delta}$}}
	\put(73,1){\parbox{10cm}{$\blue{H_{\alpha_1 }}$}}
	\put(73,91){\parbox{10cm}{$\red{H_{\alpha_2 }}$}}
	\put(61,50){\parbox{10cm}{$\gamma'$}}
	\put(33,50){\parbox{10cm}{$\gamma$}}
	\end{overpic}
	\caption{Both alcove walks end in the same alcove and have the same length, but their types differ by one  braid relation. The walk $\gamma$ in green has type $s_{1210}$, and the walk $\gamma'$ in yellow has type $s_{2120}$.}
	\label{fig:braid}
\end{figure}

\begin{example0}\label{ex:psicalc}
Here we present two examples illustrating Definition \ref{def:Psi-gamma}. Let $\gamma$ be the alcove walk of type $w=s_1s_2s_1s_0$ depicted in green in Figure \ref{fig:braid}, and let $v = s_1s_0 < w$. Consider the mask $\varepsilon = (1,0,0,1)$ on $\gamma$, which satisfies $w^\varepsilon = s^1_1s^0_2s^0_1s^1_0 = v$.  
The first crossing shown by $\varepsilon$ occurs at step 1 where $\gamma$ crosses the hyperplane $H_{\alpha_1}$.  Since this crossing is a forward step, we record $\alpha_1 \in R^+_{af}$ as the first entry of the product \eqref{eq:Psi-gamma}.  The second crossing shown by $\varepsilon$ occurs at the forward step 4, which is indexed by $\alpha_1+\alpha_2+\delta \in R^+_{af}$.
Therefore Equation \eqref{eq:Psi-gamma} gives us \[\Psi^{\varepsilon}_{\gamma} = \alpha_1 (\alpha_1 + \alpha_2 + \delta).\] 
Similarly, the mask $\varepsilon' = (0,0,1,1)$ satisfies $w^{\varepsilon'} = s_1^0s_2^0s_1^1s_0^1 = v$, and we compute 
\[\Psi^{\varepsilon'}_{\gamma} = \alpha_2 (\alpha_1 + \alpha_2 + \delta).\] 
In fact, these are the only two masks for $v$ on the chosen walk $\gamma$ of type $\w$.  Further note that  $|\varepsilon| = |\varepsilon'| = 2 = \ell(v)$ in both cases.
\end{example0}

\subsection{Polynomial algebra from alcove walks}

Our first result shows that, when summing the expressions $\Psi_\gamma^\varepsilon$ over all hyperplane crossings whose type is a given subword, the summation is independent of the initial choice of the alcove walk to $\w$.  In particular, the alcove walk to $\w$ need not be minimal.

\begin{proposition}\label{prop:Indep}
Let $w \in W_{af}$, and let $\gamma = (\base_0, \base_1, \dots, \base_m)$ and $\gamma' = (\base_0, \base'_1, \dots, \base'_{m'})$ be any two alcove walks from $\base_0$ to $\w$. Then for any $v \in W_{af}$, we have
\begin{equation}\label{eq:Indep}
\sum\limits_{\substack{|\varepsilon| = \ell(v) \\ w^\varepsilon = v}} \Psi_\gamma^\varepsilon = \sum\limits_{\substack{ |\varepsilon'| = \ell(v)  \\ w^{\varepsilon'} = v}} \Psi_{\gamma'}^{\varepsilon'}.
\end{equation}
\end{proposition}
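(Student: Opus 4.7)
The plan is to establish invariance of the left-hand side of \eqref{eq:Indep} under the two local modifications that connect any two alcove walks from $\base_0$ to $\w$. Since the types of $\gamma$ and $\gamma'$ are both (possibly non-reduced) expressions for the same element $w \in W_{af}$, they are connected by a finite sequence of two kinds of moves: \emph{braid moves}, which replace a substring $s_is_js_i\cdots$ of length $m_{ij}$ with $s_js_is_j\cdots$ of length $m_{ij}$; and \emph{nil moves}, insertion or deletion of a consecutive substring $s_is_i$. It therefore suffices to check invariance under each of these local moves individually.

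For a nil move, suppose $\gamma'$ is obtained from $\gamma$ by inserting two consecutive steps that cross the same hyperplane in opposite directions. These two new steps contribute affine roots $\beta^+ \in R^+_{af}$ and $\beta^- = -\beta^+ \in R^-_{af}$ to any mask that shows them. I would partition the set of relevant masks $\varepsilon'$ on $\gamma'$ according to the pair of entries at the inserted positions. The pair $(0,0)$ contributes exactly the original sum on $\gamma$. The pair $(1,1)$ contributes nothing, since it would force the restriction of $\varepsilon'$ to $\gamma$ to have support $\ell(v)-2$ while still representing $v$, which is impossible as any mask representing $v$ has support at least $\ell(v)$. The pairs $(1,0)$ and $(0,1)$ match in bijection via the swap of the two new entries, with matched contributions summing to $(\beta^+ + \beta^-)\Psi_\gamma^{\varepsilon'} = 0$.

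For a braid move, both alcove walk segments start and end at the same pair of alcoves, encircling the intersection of the hyperplanes in the local rank-two root subsystem. The collection of $m_{ij}$ positive affine roots indexing the forward crossings along each segment agrees, so the verification for each sub-element $v''$ of the braid element reduces to a finite identity in this local rank-two root system. For $m_{ij} = 3$ in type $A_2$, for instance, the sub-element $v'' = s_i$ demands $u\alpha_i + us_is_j\alpha_i = us_j\alpha_i$, which is a direct computation from \eqref{eq:Waction}; the other sub-elements require identities among products of roots that follow from the equality of the two root multisets together with commutativity of the polynomial ring. Analogous but lengthier rank-two calculations handle the remaining possibilities $m_{ij}\in\{4,6\}$.

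The main obstacle is precisely this case-by-case rank-two verification of the braid move, since the number of sub-elements $v''$ to check grows with $m_{ij}$. With both local invariances in hand, any two expressions for $w$ can be connected by a finite sequence of nil and braid moves — for example by first using nil moves to strip redundancies and then invoking Matsumoto's theorem to connect the resulting reduced words — giving \eqref{eq:Indep} for arbitrary $\gamma$ and $\gamma'$.
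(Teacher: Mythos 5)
Your overall strategy is the same as the paper's: invoke Tits--Matsumoto to reduce to a single local move, then verify invariance under a nil move (insertion/deletion of $s_is_i$) and under a braid move. Your nil-move argument is complete and essentially identical to the paper's: the $(0,0)$ masks biject with masks on the shorter walk, $(1,1)$ is excluded by support counting, and the $(1,0)$, $(0,1)$ masks cancel in pairs because the two inserted crossings traverse one hyperplane in opposite directions and hence carry opposite affine roots.

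The braid move, however, contains a genuine gap, and it sits exactly where the content of Proposition~\ref{prop:Indep} lies. Your reduction to rank-two identities, one for each sub-element $v''$ of the dihedral group $W_{\{i,j\}}$, is fine, but the justification you offer for those identities --- ``equality of the two root multisets together with commutativity of the polynomial ring'' --- is not valid, and the cases $m_{ij}\in\{4,6\}$ are deferred outright. Multiset equality only controls the all-ones (and all-zeros) masks; for proper sub-elements the two sums run over different collections of masks, and in general there is not even a bijection between them. Concretely, take $m_{ij}=4$ with $\langle \alpha_i^\vee,\alpha_j\rangle=-1$ and $\langle\alpha_j^\vee,\alpha_i\rangle=-2$, and let $v''=s_is_j$. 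The walk of type $s_is_js_is_j$ crosses $\alpha_i$, $\alpha_i+\alpha_j$, $\alpha_i+2\alpha_j$, $\alpha_j$ in this order and carries three minimal-support masks for $v''$, namely $(1,1,0,0)$, $(1,0,0,1)$, $(0,0,1,1)$, whereas the walk of type $s_js_is_js_i$ crosses the same roots in reverse order and carries exactly one, namely $(0,1,1,0)$. The required identity
\[
\alpha_i(\alpha_i+\alpha_j)+\alpha_i\alpha_j+(\alpha_i+2\alpha_j)\alpha_j=(\alpha_i+2\alpha_j)(\alpha_i+\alpha_j)
\]
is true, but only because of the specific linear relations among the roots (here $s_j\alpha_i=\alpha_i+2\alpha_j$, etc.); it cannot follow from multiset bookkeeping, since the numbers of summands on the two sides differ. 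So the dihedral verifications for all $v''$ and all $m_{ij}\in\{2,3,4,6\}$ --- which you yourself flag as ``the main obstacle'' --- remain unproven in your proposal. For comparison, the paper fills this in with explicit mask correspondences: for $m_{ij}=3$, submask reversal for $\varepsilon_\sigma\in\{000,110,011,111\}$, exclusion of $101$, and the two-to-one pairing $010\leftrightarrow\{100,001\}$ (your $s_i$-identity is precisely this pairing); for even $m_{ij}$ it uses a submask-reversal bijection. The example above shows that even this last correspondence is delicate --- reversing a mask inverts the subword it represents, sending masks for $v''$ to masks for $(v'')^{-1}$, so for non-involutive $v''$ the two mask sets need not match up at all --- which is all the more reason that ``analogous but lengthier calculations'' cannot be waved through: a complete argument must either carry out the finite (but nontrivial) rank-two computations for $m_{ij}=4,6$ or exhibit a pairing that provably preserves the represented sub-element.
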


Before proceeding to the proof, we illustrate the proposition in a small example, where $\gamma$ and $\gamma'$ are related by a single braid relation.

 \begin{example0}\label{ex:braid}
To understand how this calculation is affected by a braid move, consider the following example in affine type $A_2$. We return to the green alcove walk $\gamma$ of type $w=s_{1210}$ from Example \ref{ex:psicalc} shown in Figure \ref{fig:braid}.  We also consider the yellow alcove walk $\gamma'$ of type $s_{2120}$ ending in the same alcove $\w$. Finally let $v = s_{10} < w$. 

As computed in Example \ref{ex:psicalc}, there are exactly two masks for $v$ on $\gamma$ of type $\w$ such that $|\varepsilon| = |\varepsilon'| = 2 = \ell(v)$. Therefore, by our calculations in Example \ref{ex:psicalc}, we have
\[ 
\sum\limits_{\substack{|\varepsilon| = \ell(v) \\ w^\varepsilon = v}} \Psi_\gamma^\varepsilon  = \alpha_1 (\alpha_1 + \alpha_2 + \delta) + \alpha_2 (\alpha_1 + \alpha_2 + \delta) = (\alpha_1+\alpha_2)(\alpha_1+\alpha_2+\delta).
\]
To illustrate Proposition \ref{prop:Indep}, we now consider the walk $\gamma'$ whose type differs from $\type(\gamma)$ by a single braid relation.  Here instead only one mask $\varepsilon'' = (0,1,0,1)$ satisfies $w^{\epsilon''} = v$ and $|\varepsilon''| =\ell(v)$.  The first crossing shown by $\varepsilon''$ occurs at step 2 where $\gamma'$ crosses the hyperplane $H_{\alpha_1+\alpha_2}$, and the second crossing shown by $\varepsilon''$ is indexed by $(\alpha_1+\alpha_2+\delta)$ which is common to $\gamma$.  Therefore, 
\[
\sum\limits_{\substack{ |\varepsilon''| = \ell(v)  \\ w^{\varepsilon''} = v}} \Psi_{\gamma'}^{\varepsilon''} =  (\alpha_1 + \alpha_2)(\alpha_1+\alpha_2+\delta)  = \sum\limits_{\substack{|\varepsilon| = \ell(v) \\ w^\varepsilon = v}} \Psi_\gamma^\varepsilon,
\]
as the proposition asserts.
\end{example0}

With this example in hand, we now proceed to the proof in the general case.

\begin{proof}[of Proposition \ref{prop:Indep}]
By a result of Tits also known as Matsumoto's theorem \cite{Matsumoto, Tits}, any given expression for an element $w \in W_{af}$ can be obtained from any other expression for $w$ by performing a finite sequence of braid relations. Letting $s$ and $t$ denote simple generators, this says any two given expressions differ by a sequence of quadratic relations of the form $ss = 1$, or braid substitutions $(sts\cdots) = (tst \cdots)$ where there are $m_{st}$ simple generators in each expression.  

Therefore, it suffices to assume that $\type(\gamma)$ and $\type(\gamma')$ differ by applying exactly one braid relation.  Denote by $\sigma$ the subsequence of $\gamma$ which differs in type from $\gamma'$, and denote by $\sigma'$ the corresponding subsequence of $\gamma'$.  There are two cases to consider.  First suppose that $\type(\sigma) = ss$ while $\type(\sigma') = 1$, so that $m'=m-2$.  That is, $\gamma$ is the same as $\gamma'$ except that the subsequence $\sigma$ is replaced by $\sigma' = \emptyset$. 

Now consider any mask $\varepsilon \in \{0,1\}^m$ such that $w^\varepsilon = v$ and $|\varepsilon| = \ell(v)$.  There are again two cases to consider.  First suppose that the submask $\varepsilon_\sigma$ hides both crossings in $\sigma$; equivalently, $\varepsilon_\sigma = 00$.  Every such $\varepsilon$ corresponds to a unique mask $\varepsilon' \in \{0,1\}^{m'}$, obtained by simply deleting the subsequence $\varepsilon_\sigma = 00$ from $\varepsilon$. Moreover, all masks $\varepsilon'$ for $\gamma'$ such that $w^{\varepsilon'} = v$ and $|\varepsilon'| = \ell(v)$ arise in this way.  Since the $\sigma$-crossings do not contribute to $\Psi_\gamma^\varepsilon$ in this case, and since all hyperplane crossings outside of $\sigma$ are identical in both $\gamma$ and $\gamma'$, then we have $\Psi^{\varepsilon}_\gamma = \Psi^{\varepsilon'}_{\gamma'}$ and thus
\begin{equation*}
\sum\limits_{\substack{\varepsilon_\sigma =00 \\ |\varepsilon| = \ell(v) \\ w^\varepsilon = v}} \Psi_\gamma^\varepsilon = \sum\limits_{\substack{ |\varepsilon'| = \ell(v)  \\ w^{\varepsilon'} = v}} \Psi_{\gamma'}^{\varepsilon'},
\end{equation*}

To complete the proof in the case where $\type(\sigma) = ss$, it remains to show that
\begin{equation}\label{eq:zero}
\sum\limits_{\substack{ \varepsilon_\sigma \neq 00 \\ |\varepsilon| = \ell(v) \\  w^\varepsilon = v}} \Psi_\gamma^\varepsilon = 0.
\end{equation}
There are two further cases to consider.  Suppose that the submask $\varepsilon_\sigma$ shows both crossings in $\sigma$, meaning that $\varepsilon_\sigma = 11$. Then $\gamma$ crosses the same hyperplane twice at the two steps corresponding to $\sigma$.  However, the corresponding subword $w^\varepsilon$ is of the form $w^\varepsilon = s_{j_1} \cdots ss \cdots s_{j_l} = v$ and cannot be a reduced expression.  In particular, $|\varepsilon| > \ell(v)$, and so masks such that $\varepsilon_\sigma = 11$ do not occur in the sum in \eqref{eq:zero}.

Finally, consider the case where $\varepsilon_\sigma = 10$. Each such $\varepsilon$ corresponds to a unique mask $\overline{\varepsilon}$ for $\gamma$, obtained by switching the subsequence $\varepsilon_\sigma = 10$ to $\overline{\varepsilon}_\sigma = 01$ and leaving all other entries the same.  Moreover, all masks  $\overline{\varepsilon}$ for $\gamma$ such that $\overline{\varepsilon}_\sigma =  01$ with $w^{\overline{\varepsilon}} = v$ and $|\overline{\varepsilon}| = \ell(v)$ arise in this way.  If the first crossing of $\sigma$ occurs at step $j$ in $\gamma$, we can factor $\Psi^\varepsilon_\gamma = (\beta_{i_j}+k_{i_j}\delta)\prod\limits_{1=\varepsilon_{\ell\neq j}}\left( \beta_{i_\ell}+k_{i_\ell}\delta\right)$.  On the other hand, by definition $\Psi^{\overline{\varepsilon}}_\gamma = (-\beta_{i_j}-k_{i_j}\delta)\prod\limits_{1=\varepsilon_{\ell\neq j}}\left( \beta_{i_\ell}+k_{i_\ell}\delta\right)$, using the facts that $\type(\sigma) = ss$ and that $\varepsilon$ and $\overline{\varepsilon}$ agree on all crossings outside of $\sigma$.  Therefore, $\Psi^\varepsilon_\gamma + \Psi^{\overline{\varepsilon}}_\gamma = 0$, completing the proof of \eqref{eq:zero} and thus concluding  the case when $\type(\sigma) = ss$.

Now suppose that $\gamma$ is the same as $\gamma'$ except that the subsequence $\sigma$ is of $\type(\sigma) = (sts\cdots)$ and is replaced by $\sigma'$ of $\type(\sigma') = (tst\cdots)$, where both expressions use $m_{st}$ generators. We treat the case where $m_{st}$ is even first.  

Consider any mask $\varepsilon$ for $\gamma$ such that $w^\varepsilon = v$ and $|\varepsilon| = \ell(v)$. When $m_{st}$ is even, each such $\varepsilon$ corresponds to a unique mask $\varepsilon'$ for $\gamma'$ obtained by keeping all entries outside $\sigma$ the same, and replacing the submask $\varepsilon_\sigma = b_1\cdots b_{m_{st}}$ by the same binary sequence read backwards; that is, define $\varepsilon'_{\sigma'} = b_{m_{st}}\cdots b_1$.  Moreover, for $m_{st}$ even, all masks $\varepsilon'$ for $\gamma'$ such that $w^{\varepsilon'} = v$ and $|\varepsilon'| = \ell(v)$ arise in this way.

 The expression $\Psi^\varepsilon_\gamma$ factors into a product of those affine roots indexing steps within $\sigma$ and those for affine roots in the complement $\gamma - \sigma$; we write $\Psi^\varepsilon_\gamma = \Psi^\varepsilon_\sigma \Psi^\varepsilon_{\gamma - \sigma}$.  We claim that $\Psi^\varepsilon_\gamma = \Psi^{\varepsilon'}_{\gamma'}$, which suffices to prove \eqref{eq:Indep} in case $m_{st}$ is even, since we have shown the masks indexing the summations to be in bijection. By construction, $\Psi_{\gamma - \sigma}^\varepsilon = \Psi_{\gamma'-\sigma'}^{\varepsilon'}$.  Therefore, it remains to prove that $\Psi^\varepsilon_\sigma = \Psi^{\varepsilon'}_{\sigma'}$ for even $m_{st}$.

 After relabeling, we may assume without loss of generality that both $\sigma$ and $\sigma'$ start in the base alcove $\base_0$.  The types of both $\sigma$ and $\sigma'$ coincide with two different reduced expressions for the longest word $w^0_{st}$ in the standard parabolic subgroup $W_{\{s,t\}}$, which is the dihedral group generated by $s$ and $t$.  The alcove walks $\sigma$ and $\sigma'$ thus cross the same hyperplanes, just in the opposite order.  Therefore, $\varepsilon_\sigma$ and $\varepsilon'_{\sigma'}$ also show (and hide) the exact same hyperplanes, recorded in the opposite order.  Therefore, $\Psi^\varepsilon_\sigma = \Psi^{\varepsilon'}_{\sigma'}$ for even $m_{st}$.

 Finally, consider the remaining case where $\type(\sigma) = sts$ and $\type(\sigma') = tst$ as in Example \ref{ex:braid}, recalling that in the affine context, this is the only situation in which $m_{st}$ is odd. Consider any mask $\varepsilon$ for $\gamma$ such that $w^\varepsilon = v$ and $|\varepsilon| = \ell(v)$. There are eight possible binary subsequences $\varepsilon_\sigma \in \{0,1\}^3$.  Note that if $\varepsilon_\sigma = 101$, however, then $\type(\sigma) = ss$ and $|\varepsilon| > \ell(v)$, and so $\varepsilon$ does not index a summand.  If $\varepsilon_\sigma \in \{ 000,110,011,111 \}$, then the same argument holds as for $m_{st}$ even; namely, define $\varepsilon'_{\sigma'}$ by reading the sequence $\varepsilon_\sigma$ backwards, whence $\Psi^\varepsilon_\sigma = \Psi^{\varepsilon'}_{\sigma'}$.
 
 Next consider $\varepsilon_\sigma = 010$.  We shall pair $\varepsilon_\sigma$ with two different masks for $\sigma'$; namely $\varepsilon'_1 = 100$ and $\varepsilon'_2 = 001$.  Compute directly in this case that $\Psi^\varepsilon_\sigma = \alpha_s+\alpha_t$,  while $\Psi^{\varepsilon'_1}_{\sigma'} = \alpha_t$ and $\Psi^{\varepsilon'_2}_{\sigma'} = \alpha_s$, so that $\Psi^\varepsilon_\sigma = \Psi^{\varepsilon'_1}_{\sigma'}+\Psi^{\varepsilon'_2}_{\sigma'}$ as in Example \ref{ex:braid}.  By symmetry, the remaining two masks $\varepsilon_1 = 100$ and $\varepsilon_2 = 001$ pair with $\varepsilon' = 010$ to give $\Psi^{\varepsilon_1}_{\sigma}+\Psi^{\varepsilon_2}_{\sigma} = \Psi^{\varepsilon'}_{\sigma'}$. Having covered all possible cases for the submasks $\varepsilon_\sigma$ and $\varepsilon'_{\sigma'}$, by factoring out the common product $\Psi_{\gamma - \sigma}^\varepsilon = \Psi_{\gamma'-\sigma'}^{\varepsilon'}$ we obtain \eqref{eq:Indep} in this final case, concluding the proof of the proposition.
 \end{proof}

\begin{remark}
Proposition \ref{prop:Indep} both refines and generalizes the analogous Lemma 4.1 of \cite{Billey}, by considering non-reduced expressions for $w$ and making more precise the relationship between corresponding summands affected by any given braid move. 
\end{remark}

In light of Proposition \ref{prop:Indep}, the summation in \eqref{eq:Indep} only depends on the choice of the elements $w,v \in W_{af}$ themselves, and not on any particular expressions for them, as the following definition formalizes.  
\begin{definition}\label{def:psidef}
Let $w, v \in W_{af}$.  Fix $\gamma = (\base_0, \base_1, \dots, \base_m)$ any alcove walk to $\w$.  Define
\begin{equation}\label{eq:loc}
\psi^v(w) = \sum\limits_{\substack{ |\varepsilon| = \ell(v) \\ w^\varepsilon = v}} \Psi^\varepsilon_\gamma.
\end{equation}
If $w = e$, then we may choose the trivial walk $\gamma = (\base_0)$ and the empty mask $\varepsilon_\emptyset$, in which case $\Psi^{\varepsilon_\emptyset}_\gamma =1$. If the sum is empty, then $\psi^v(w) = 0$.  
\end{definition}
Note that by definition, $\psi^v(w) \in \ZZ[\alpha_0, \alpha_1, \dots, \alpha_{n-1}]$ using the relation $\delta = \alpha_0+\theta$.  In the next section, we will recognize $\psi^v$ as the functions of Kostant and Kumar corresponding to localizations of affine Schubert classes at torus-fixed points \cite{KostantKumar}.

\section{Alcove Walks and Localization}\label{sec:GKMresults}

We begin this section by briefly reviewing the setup of GKM theory for the affine flag variety, referring the reader to expository references such as \cite{Kumar,kSchur,TymGKM} for more details.  We then prove in Theorem \ref{thm:GKM} that the functions $\psi^v$ defined in terms of (not necessarily minimal) alcove walks satisfy the GKM condition, placing them in the image of the localization map.  In Theorem \ref{thm:Local}, we further identify $\psi^v$ as the image of the affine Schubert class $[X_v]$ under this localization.

\subsection{Affine flag varieties}\label{sec:GKMintro}

We begin by connecting the geometry and combinatorics of affine Weyl groups and their root systems to some additional group theory and topology.

Let $G$ be a semisimple algebraic group over $\CC$. We fix a Borel subgroup $B$ such that the roots in $R^+$ are positive with respect to the opposite Borel subgroup $B^-$.  This choice ensures that the elements of $R$ which lie in the half-space bounded by $H_{\theta}$ containing the base alcove $\base_0$ are the positive finite roots $R^+$.  The Borel contains a split maximal torus $T$, and the finite Weyl group is then precisely $W = N_G(T)/T$.  For example, we could consider $G=SL_n$ with $B$ the subgroup of upper-triangular matrices and $T$ the subgroup of diagonal matrices, in which case $B^-$ is the subgroup of lower-triangular matrices and $W = S_n$ is the symmetric group.

In the affine context, we will work over the field of Laurent series $F = \CC((t))$, having ring of integers $\mathcal{O} = \CC[[t]]$. To distinguish the various affine versions of the Borel subgroup and maximal torus, we write $B_{af} = B(F)$ and $T_{af} = T(F)$, which are subgroups of $G(F)$.  The (extended) affine Weyl group is then $W_{af} = N_GT(F)/T(\mathcal{O})$.  If $G = SL_n$, then $B_{af}$ (resp.~$T_{af}$) are still upper-triangular (resp.~diagonal) matrices, though now with Laurent series entries, and $W_{af} = \widetilde{S}_n$ is the affine symmetric group.

Define the \emph{(positive) Iwahori subgroup} $I$ of $G(F)$ as the preimage of the opposite Borel subgroup $B^-(\CC)$ under the projection map $G(\mathcal{O}) \to G(\CC)$.  Equivalently, $I$ is the stabilizer of the base alcove $\base_0$ under the natural left action of $G(F)$ on $V^*$. If $G = SL_n$ and $B$ is the subgroup of upper-triangular matrices, then 
 \begin{equation*} I = \begin{pmatrix} \mathcal{O}^{\times} & t\mathcal{O} & \dots & t\mathcal{O} \\ \mathcal{O} & \mathcal{O}^{\times} & \dots & t\mathcal{O} \\ \vdots & \vdots & \ddots & \vdots \\ \mathcal{O} & \mathcal{O} & \dots & \mathcal{O}^{\times} \\ \end{pmatrix}.\end{equation*} 
 If we expand our power series in $t^{-1}$ instead, we obtain a corresponding collection of groups and subgroups.  Denoting $\mathcal{O}^- = \CC[[t^{-1}]]$, we define the \emph{negative Iwahori subgroup} $I^-$ of $G(F)$ to be the preimage of the Borel subgroup $B(\CC)$ under the projection $G(\mathcal{O}^-) \to G(\CC)$. There are two resulting affine versions of the Bruhat decomposition expressing $G(F)$ as a union of \emph{(opposite) affine Schubert cells}:
\begin{equation}\label{eq:bruhat}
G(F) = \bigsqcup\limits_{w \in W_{af}} IwI = \bigsqcup\limits_{v \in W_{af}} I^- v I.
\end{equation}

The quotient $G(F)/I$ is called the \emph{affine flag variety}, which we typically denote by $G/I$ for brevity. This homogeneous space is a projective ind-variety, which has a distinguished collection $X_v = \overline{I^-vI/I}$ of subvarieties called \emph{(opposite) affine Schubert varieties}, indexed by the elements $v \in W_{af}$ of the affine Weyl group; see \cite[Sec.~7.1]{Kumar}.
The affine flag variety admits a $T_{af}$-equivariant cohomology ring $H^*_{T_{af}}(G/I)$.  Kostant and Kumar provide an explicit algebraic construction of the ring $H^*_{T_{af}}(G/I)$ in the more general Kac-Moody setting, as a free module over the polynomial ring $S = H^*_{T_{af}}(\operatorname{pt})$, having an additive basis given by the affine Schubert classes $[X_v]$.  We refer the reader to \cite[Sec.~11.3]{Kumar} for more details.

\subsection{GKM theory in affine flag varieties}\label{sec:GKMintro}

Kostant and Kumar defined a family of functions which determine the ring structure for the equivariant cohomology of any Kac-Moody flag variety in \cite{KostantKumar}.  We now briefly review this correspondence, referred to as \emph{GKM theory} due to the foundational work of Goresky, Kottwitz, and MacPherson \cite{GKM}.  We refer the reader to \cite[Section 11.3]{Kumar} or \cite[Chapter 4]{kSchur} for more details in the Kac-Moody setting. For an introduction to GKM theory in a slightly less general context, we also recommend the survey by Tymoczko  \cite{TymGKM}.

The inclusion of the $T_{af}$-fixed points into the affine flag variety $G/I$, themselves also indexed by elements $w \in W_{af}$, induces an injection 
\begin{equation}\label{eq:AffLoc}
H^*_{T_{af}}(G/I) \hookrightarrow H^*_{T_{af}}(G/I)^{T_{af}} \cong \bigoplus_{w \in W_{af}} \ZZ[\alpha_0,\alpha_1, \dots, \alpha_{n-1}]. 
\end{equation}
By definition, the induced map sends the Schubert class $[X_v]$ to the (infinite) collection of localizations $[X_v] |_w$ at the $T_{af}$-fixed points.  As such, the Schubert class $[X_v] \in H^*_{T_{af}}(G/I)$ is identified with a function 
\[
\xi^v : W_{af} \to \ZZ[\alpha_0,\alpha_1, \dots, \alpha_{n-1}].
\]
  When evaluated at $w \in W_{af}$, the polynomial $\xi^v(w)$ is homogeneous of degree $\ell(v)$. 
Altogether, this \emph{localization map} from Equation \eqref{eq:AffLoc} identifies each $T_{af}$-equivariant Schubert class with the corresponding polynomial-valued function $[X_v] \mapsto \xi^v$.   

The ring structure of $H^*_{T_{af}}(G/I)$ with respect to the affine Schubert basis is completely determined by the pointwise product of the family of functions $\xi^v$. Moreover, the image of  $H^*_{T_{af}}(G/I)$ in this polynomial ring has an incredibly simple characterization in terms of the \emph{GKM condition} reviewed in Equation \eqref{eq:GKM} below.  This remarkable approach is due to Goresky, Kottwitz, and MacPherson for certain algebraic varieties \cite{GKM}, was developed in the Kac-Moody setting by Kostant and Kumar \cite{KostantKumar}, and further generalized by Henriques, Harada, and Holm \cite{HHH}; see also \cite[Theorem 9.2]{GKMAffSpringer} for the context of affine Springer fibers.  For the reader interested in the small-torus equivariant cohomology instead, setting the null root $\delta \mapsto 0$ recovers the ring structure in $H^*_{T}(G/I)$.

The polynomials $\xi^v(w)$ can be computed in several ways, including the original method from \cite[Theorem 4]{Billey} often referred to as \emph{Billey's formula}, generalized to Kac-Moody flag varieties by an argument of Kumar included in the appendix; see also Proposition 3.10 in \cite[Chapter 4]{kSchur}. We also recommend the survey on Billey's formula by Tymoczko \cite{TymBilley}.  Andersen, Jantzen, and Soergel \cite{AJS} provided an independent formulation in their work on representations of quantum groups. See also the work of Goldin-Tolman  \cite{goldin2009towards} and Guillemin-Zara \cite{guillemin2002combinatorial} providing formulas for localizations of canonical cohomology classes by summing over paths in the GKM graph. One primary goal of the present paper is to offer a new perspective on Billey's formula, which is critical to understanding the ring structure in $H^*_{T_{af}}(G/I)$.

\subsection{Alcove walks and the GKM condition}\label{sec:GKM}

Our first theorem proves that the functions $\psi^v$ defined in \eqref{def:psidef}  satisfy the GKM condition.  In particular, each $\psi^v$ lies in the image of the localization map \eqref{eq:AffLoc}, and thus corresponds to a cohomology class in the ring $H^*_{T_{af}}(G/I)$.

\begin{figure}
	\begin{overpic}[scale=0.38]{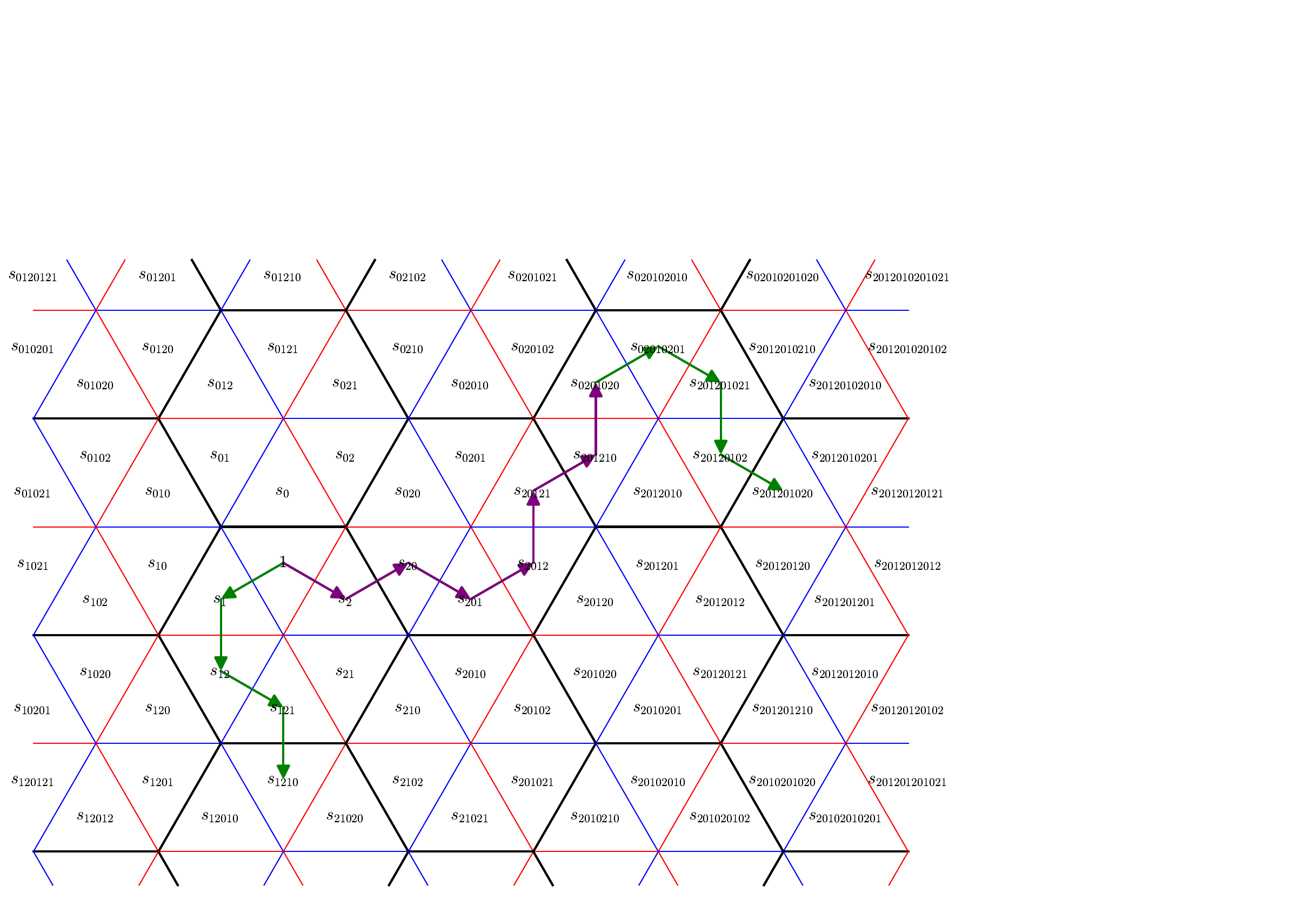}
	
	\put(68,1){\parbox{10cm}{$\blue{H_{-\alpha_1 +2\delta}}$}}
	\put(19,28){\parbox{10cm}{$\gamma_w$}}
	\put(48,34){\parbox{10cm}{$\gamma_{s_\beta}$}}
	\end{overpic}
\caption{The above image shows a walk to $\w$ where $w = s_{1210}$ in green, and a walk in purple corresponding to the palindromic expression $s_\beta = s_{2012102}$ for the reflection over the hyperplane $H_{\beta} = H_{-\alpha_1+2\delta}$. In our running example, we will use the (non-minimal) alcove walk of type $\textbf{s}_{\beta} \w$ coming from the concatenation of these two walks: first the purple, followed by the green walk.}
\label{fig:sbetaw}
\end{figure}

\begin{theorem}\label{thm:GKM}
Let $w,v \in W_{af}$. Then the functions $\psi^v$ satisfy the GKM condition; that is, for any $\beta \in R_{af}$,  we have
\begin{equation}\label{eq:GKM}
\beta \ \vert \ \left( \psi^v(w) - \psi^v(s_\beta w) \right).
\end{equation}
\end{theorem}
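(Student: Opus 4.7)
The plan is to use Proposition~\ref{prop:Indep} to compute $\psi^v(w)$ and $\psi^v(s_\beta w)$ via specifically chosen walks that share structure. Assuming without loss of generality that $\beta \in R_{af}^+$, fix any walk $\gamma$ from $\base_0$ to $\w$, and choose a reduced palindromic expression $s_\beta = r_1 r_2 \cdots r_{2k+1}$ with $r_i = r_{2k+2-i}$. Letting $\gamma_\beta$ be the corresponding walk from $\base_0$ to $s_\beta \base_0$, the concatenation $\gamma' = \gamma_\beta \cdot \gamma$ is a walk from $\base_0$ to $s_\beta \w$, as illustrated in Figure~\ref{fig:sbetaw}, which I would use to compute $\psi^v(s_\beta w)$.

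Next I would decompose each mask on $\gamma'$ uniquely as $(\eta, \mu)$, where $\eta$ is a mask on $\gamma_\beta$ producing some $u \leq s_\beta$ and $\mu$ is a mask on $\gamma$ producing $u^{-1} v$.  The support constraint forces both submasks to have minimal support, hence $\ell(u) + \ell(u^{-1} v) = \ell(v)$.  Since the $\gamma$-portion of $\gamma'$ begins at the alcove $s_\beta \base_0$, the root indexing step $(2k+1) + j$ of $\gamma'$ equals $s_\beta \rho_j$, where $\rho_j$ is the root at step $j$ of $\gamma$.  Factoring yields the decomposition
\begin{equation*}
\psi^v(s_\beta w) = \sum_{u} \psi^u(s_\beta) \cdot \bigl(s_\beta \cdot \psi^{u^{-1} v}(w)\bigr),
\end{equation*}
where $s_\beta \cdot P$ denotes the polynomial obtained from $P$ by substituting $s_\beta \alpha$ for each affine root $\alpha$, and the sum ranges over $u$ satisfying the Bruhat compatibility above.

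The $u = e$ term equals $s_\beta \cdot \psi^v(w)$, since $\psi^e(s_\beta) = 1$.  Because $P - s_\beta \cdot P$ vanishes along $H_\beta$ for any polynomial $P$, the difference $\psi^v(w) - s_\beta \cdot \psi^v(w)$ is automatically divisible by the linear form $\beta$.  Hence \eqref{eq:GKM} reduces modulo $\beta$ to showing that $\psi^u(s_\beta)$ is divisible by $\beta$ for every non-identity $u \leq s_\beta$, which would make each remaining term in the decomposition individually divisible by $\beta$.

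The main obstacle is this final divisibility claim, and it is where the palindromic choice becomes essential.  The roots $\rho'_j$ indexing symmetric steps of $\gamma_\beta$ satisfy $\rho'_j + \rho'_{2k+2-j} = \langle \beta^\vee, \rho'_j \rangle \beta$, while the middle root $\rho'_{k+1}$ equals $\beta$ itself.  My approach would be induction on $\ell(s_\beta)$: the base case $\ell(s_\beta) = 1$ is trivial since $s_\beta$ is simple and the only non-identity element below is $s_\beta$ itself, with $\psi^{s_\beta}(s_\beta) = \beta$.  For the inductive step, write $s_\beta = r_1 s_{\beta'} r_1$ with $\beta' = r_1 \beta$ and $\ell(s_{\beta'}) = \ell(s_\beta) - 2$, and express $\psi^u(s_\beta)$ in terms of $\psi^{u'}(s_{\beta'})$ by a case analysis on the outer two mask entries; the inductive hypothesis, combined with the explicit identity $\rho'_{k+1} = \beta$ for the middle step and the palindromic symmetry above, would then close the induction.
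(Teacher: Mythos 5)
Your proposal is correct, and its overall architecture matches the paper's: both fix a palindromic reduced expression for $s_\beta$, concatenate the corresponding walk with a walk $\gamma$ to $\w$ (exactly as in Figure \ref{fig:sbetaw}), invoke Proposition \ref{prop:Indep} to compute $\psi^v(s_\beta w)$ on the concatenation, and reduce \eqref{eq:GKM} to the key claim that $\beta$ divides $\psi^u(s_\beta)$ for every non-identity $u \leq s_\beta$. Where you genuinely diverge is in the execution of both halves. First, your convolution identity $\psi^v(s_\beta w) = \sum_u \psi^u(s_\beta)\,\bigl(s_\beta\cdot\psi^{u^{-1}v}(w)\bigr)$ packages into a single formula what the paper handles as a two-case mask analysis (masks showing some crossing of $\gamma_{s_\beta}$ versus masks supported only on the reflected copy of $\gamma$); in particular, the paper treats your $u=e$ term by pairing each mask with a term of $\psi^v(w)$ and checking divisibility of the difference factor by factor, which is precisely your observation that $P - s_\beta\cdot P \equiv 0 \pmod{\beta}$, a consequence of $\lambda - s_\beta\lambda = \langle\alpha^\vee,\mu\rangle\beta$ from \eqref{eq:affonaff}. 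Second, for the key divisibility claim the paper argues by a direct involution rather than induction: for a minimal-support mask not showing the middle letter, the shown entry closest to the middle is flipped to its mirror position, and the paired terms sum to a multiple of $\beta$ by the same palindromic symmetry $\rho'_j + \rho'_{2k+2-j} = \langle\beta^\vee,\rho'_j\rangle\beta$ you state. Your induction on $\ell(s_\beta)$, peeling off the outer letters via $s_\beta = r_1 s_{\beta'} r_1$ with a four-way case split on the outer mask entries, is a different mechanism that does close: the only configuration not killed by the inductive hypothesis is $u = r_1$, where the two cross terms sum to $\rho'_1 + \rho'_{2k+1}$, a multiple of $\beta$. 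As for what each buys: the paper's pairing is shorter and non-recursive once set up, while your route produces the convolution formula as a statement of independent interest (a localization analogue of left multiplication by $s_\beta$, in the spirit of Kostant--Kumar) and avoids the delicate bookkeeping of the closest-to-the-middle involution, where one must observe that minimal support rules out showing both mirror positions with nothing shown in between.
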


\begin{proof}
Without loss of generality, we assume $\beta \in R^+_{af}$.
Since $s_\beta$ is a reflection in a Coxeter group, there exist $u \in W_{af}$ and $s_i$ for some $i \in I_{af}$ such that $s_\beta = us_iu^{-1}$.
We can thus choose a palindromic reduced expression $s_\beta = s_{u_1}\cdots s_{u_r} s_i s_{u_r}\cdots s_{u_1}$, which determines an alcove walk $\gamma_{s_{\beta}}$ of this type; see the purple walk in Figure \ref{fig:sbetaw}. In addition, choose any expression for $w = s_{i_1}\cdots s_{i_m}$, and let $\gamma_w$ be the alcove walk of this type; see the green walk starting from $\base_0$ in Figure \ref{fig:sbetaw}.  Let $\gamma_{s_{\beta}w}$ be the alcove walk of type $\textbf{s}_\beta \w$ obtained by concatenating the purple walk $\gamma_{s_\beta}$ followed by the green walk $\gamma_w$; this concatenation is also shown in Figure \ref{fig:sbetaw}. Note that the two green walks in Figure \ref{fig:sbetaw} are related by the reflection $s_\beta$ across hyperplane $H_\beta$.

We now use Equation \eqref{eq:loc} to compute the polynomials $\psi^v(w)$ and $\psi^v(s_\beta w)$, using the walks $\gamma_{s_\beta}$ and $\gamma_{s_{\beta}w}$, respectively.  We first consider those terms $\Psi^\varepsilon_{\gamma_{s_\beta w}}$ of $\psi^v(s_\beta w)$ associated to masks which show crossings in $\gamma_{s_\beta}$.  Let $\varepsilon \in \{ 0,1 \}^{2r+m+1}$ be a mask with $(s_\beta w)^{\varepsilon} = v$ and $|\varepsilon| = \ell(v)$,  and such that $\varepsilon$ shows at least one crossing of $\gamma_{s_{\beta}}$. Since we obtained $\gamma_{s_\beta w}$ by concatenation, the expression $\Psi^\varepsilon_{\gamma_{s_\beta w}}$ factors into a product of those affine roots indexing steps within $\gamma_{s_\beta}$ and those from $\gamma_w$, meaning $\Psi^\varepsilon_{\gamma_{s_\beta w}}=\Psi^\varepsilon_{\gamma_{s_\beta}}\Psi^\varepsilon_{\gamma_{w}}$.  The submask $\varepsilon'$ of $\varepsilon$ restricted to $\gamma_{s_\beta}$ corresponds to a subword $v' \leq v$.  It suffices to prove that for any such $v'$, the localization $\psi^{v'}(s_\beta)$ is divisible by $\beta$, in order to prove that $\beta$ divides $\Psi^\varepsilon_{\gamma_{s_\beta }}$ and thus $\Psi^\varepsilon_{\gamma_{s_\beta w}}$.

Given any $\varepsilon' \in \{ 0,1\}^{\ell(v')}$ such that $(s_\beta)^{\varepsilon' }= v'$ and $|\varepsilon'| = \ell(v')$, there is a 1 in $\varepsilon'$ that is closest (on either side)  to the middle reflection $s_i$ in the palindromic expression $s_\beta = s_{u_1}\cdots s_{u_r} s_i s_{u_r}\cdots s_{u_1}$.  If the mask $\varepsilon'$ is supported on $s_i$ itself, then $\varepsilon'$ shows the hyperplane crossing $\beta$, in which case $\beta \mid \psi^{v'}(s_\beta)$.  Otherwise, there is support on some $s_{u_j}$ closest to $s_i$ in the expression.   Without loss of generality, assume that $s_{u_j}$ occurs in the lefthand portion of the palindrome. Denote by $\varepsilon'_j$ the entry of $\varepsilon'$ closest to but not supported on $s_i$, showing the reflection $s_{u_j}$ in the palindromic expression for $s_\beta$.  This corresponds to a unique mask $\varepsilon''\in \{ 0,1\}^{\ell(v')}$ which instead shows  $s_{u_j}$ in the righthand portion of the palindromic expression, and is otherwise identical to $\varepsilon'$. This mask also satisfies $(s_\beta)^{\varepsilon''} = v'$, and moreover, all masks $\varepsilon''$ for $\gamma_{s_\beta}$ such that $(s_\beta)^{\varepsilon''} = v'$ and $|\varepsilon''| = \ell(v')$ arise in this way.  

Thus, all remaining terms of $\psi^{v'}(s_\beta)$ come in pairs of the form $\Psi^{\varepsilon'}_{\gamma_{s_\beta}} = (\alpha') \prod_{1 = \varepsilon_{\ell \neq j}}  (\beta_{i_\ell}+k_{i_\ell}\delta)  $ and $\Psi^{\varepsilon''}_{\gamma_{s_\beta}} = (\alpha'') \prod_{1 = \varepsilon_{\ell \neq j} } (\beta_{i_\ell}+k_{i_\ell}\delta) $ for some $\alpha', \alpha'' \in R^+_{af}$, using the fact that $\varepsilon'$ and  $\varepsilon''$ agree everywhere except at the two crossings corresponding to the two $s_{u_j}$. Because we chose a palindromic expression for $s_\beta$, with a corresponding alcove walk symmetric over the hyperplane $H_{\beta}$, then up to a sign, $\alpha''$ is the reflection of $\alpha'$ over $H_\beta$; that is, $\pm \alpha'' = s_\beta(\alpha')$.  Now writing $\beta = \alpha+k\delta \in R_{af}^+$ and $\alpha' = \mu+m\delta \in R^+_{af}$, we apply formula \eqref{eq:affonaff} to obtain 
\[\pm \alpha''= s_{\alpha+k\delta}(\mu+m\delta) = \mu - \langle \alpha^\vee, \mu \rangle \alpha + \left( m- \langle k\alpha^\vee, \mu \rangle\right) \delta.
\]
Since the walk $\gamma_{s_\beta}$ is minimal, then since $\alpha' \in R_{af}^+$ corresponds to a step prior to the one crossing $\beta$. In fact $s_\beta(\alpha') \in R^-_{af}$, in which case $\alpha'' = -s_\beta(\alpha')$ so that this forward step is also indexed by a positive root.  We now compute the sum
\[ \alpha'+\alpha'' = \alpha' - s_\beta(\alpha') =  \langle \alpha^\vee, \mu \rangle \alpha +\langle k\alpha^\vee, \mu \rangle \delta = \langle \alpha^\vee, \mu\rangle ( \alpha + k\delta) = \langle \alpha^\vee, \mu \rangle \beta.
\]
Therefore, $\beta$ divides each sum $\Psi^{\varepsilon'}_{\gamma_{s_\beta}} + \Psi^{\varepsilon''}_{\gamma_{s_\beta}} = (\alpha' + \alpha'') \prod_{1 = \varepsilon_{\ell \neq j} } (\beta_{i_\ell}+k_{i_\ell}\delta) $.
Altogether,  $\beta \mid \psi^{v'}(s_\beta)$ and thus that $\beta \mid \Psi^{\varepsilon}_{\gamma_{s_\beta w}}$ for all masks $\varepsilon$ which show at least one crossing of $\gamma_{s_\beta}$.

Now consider those terms of $\psi^v(s_\beta w)$ indexed by masks supported only on $\gamma_w$. We show that the difference of these terms with $\psi^v(w)$ is divisible by $\beta$.  Given any mask $\varepsilon$ for $\gamma_w$ such that $w^\varepsilon = v$ and $|\varepsilon| = \ell(v)$, there is a corresponding mask $\varepsilon'$ for $\gamma_{s_\beta w}$ obtained by pre-appending $2r+1$ zeros to $\varepsilon$. Moreover, all masks $\varepsilon' \in \{0,1\}^{2r+m+1}$ such that $(s_\beta w)^{\varepsilon'} = v$ and $|\varepsilon'| = \ell(v)$ which do not show any crossings of $\gamma_{s_\beta}$ arise in this way. We again write $\beta = \alpha+k \delta \in R^+_{af}$.  Denote by $\alpha_\varepsilon = \mu+m\delta \in R_{af}$ the affine root corresponding to any crossing in $\gamma_w$ shown by $\varepsilon$. Since $\gamma_{s_\beta w}$ was constructed as the concatenation of $\gamma_{s_\beta}$ and $\gamma_w$, the corresponding positive root $\alpha_{\varepsilon'} \in R^+_{af}$ shown by $\varepsilon'$ equals $\alpha_{\varepsilon'} = s_\beta(\alpha_{\varepsilon})$.  

As in the previous case, we compute the difference $\alpha_\varepsilon - \alpha_{\varepsilon'} = \alpha_\varepsilon - s_\beta(\alpha_{\varepsilon}) =  \langle \alpha^\vee, \mu\rangle \beta,$ which shows that $\beta$ divides $\alpha_\varepsilon - \alpha_{\varepsilon'}$. Therefore, all terms of $\psi^v(w)$ pair with a unique term in $\psi^v(s_\beta w)$ indexed by a mask supported outside of $\gamma_{s_\beta}$, and for each such pair of masks $\varepsilon, \varepsilon'$ we have shown that the difference $\Psi^\varepsilon_{\gamma_w} -\Psi^{\varepsilon'}_{\gamma_{s_\beta w}}$ is divisible by $\beta$. The difference between $\psi^v(w)$ and the terms of $\psi^v(s_\beta w)$ indexed by masks supported only on $\gamma_w$ is thus divisible by $\beta$.  Having already shown that $\beta \mid \psi^v(s_\beta w)$ for terms indexed by masks with support on $\gamma_{s_\beta}$, we have that $\beta \ \vert \ \left( \psi^v(w) - \psi^v(s_\beta w) \right)$, as desired.
\end{proof}

\subsection{Localization in affine flag varieties}\label{sec:Local}

We are now prepared to state our second theorem, which provides a (non-minimal) alcove walk interpretation of the functions $\xi^v$ of Kostant and Kumar \cite{KostantKumar}.  In particular, not only do the functions $\psi^v$ satisfy the GKM condition as proved in Theorem \ref{thm:GKM}, but in fact they coincide precisely with the affine Schubert basis; see Figures \ref{fig:class_s_10} and \ref{fig:class_s_1} for examples of affine Schubert classes calculated via Theorem \ref{thm:Local}.

\begin{figure}
\begin{center}

	\begin{overpic}[scale=0.4]{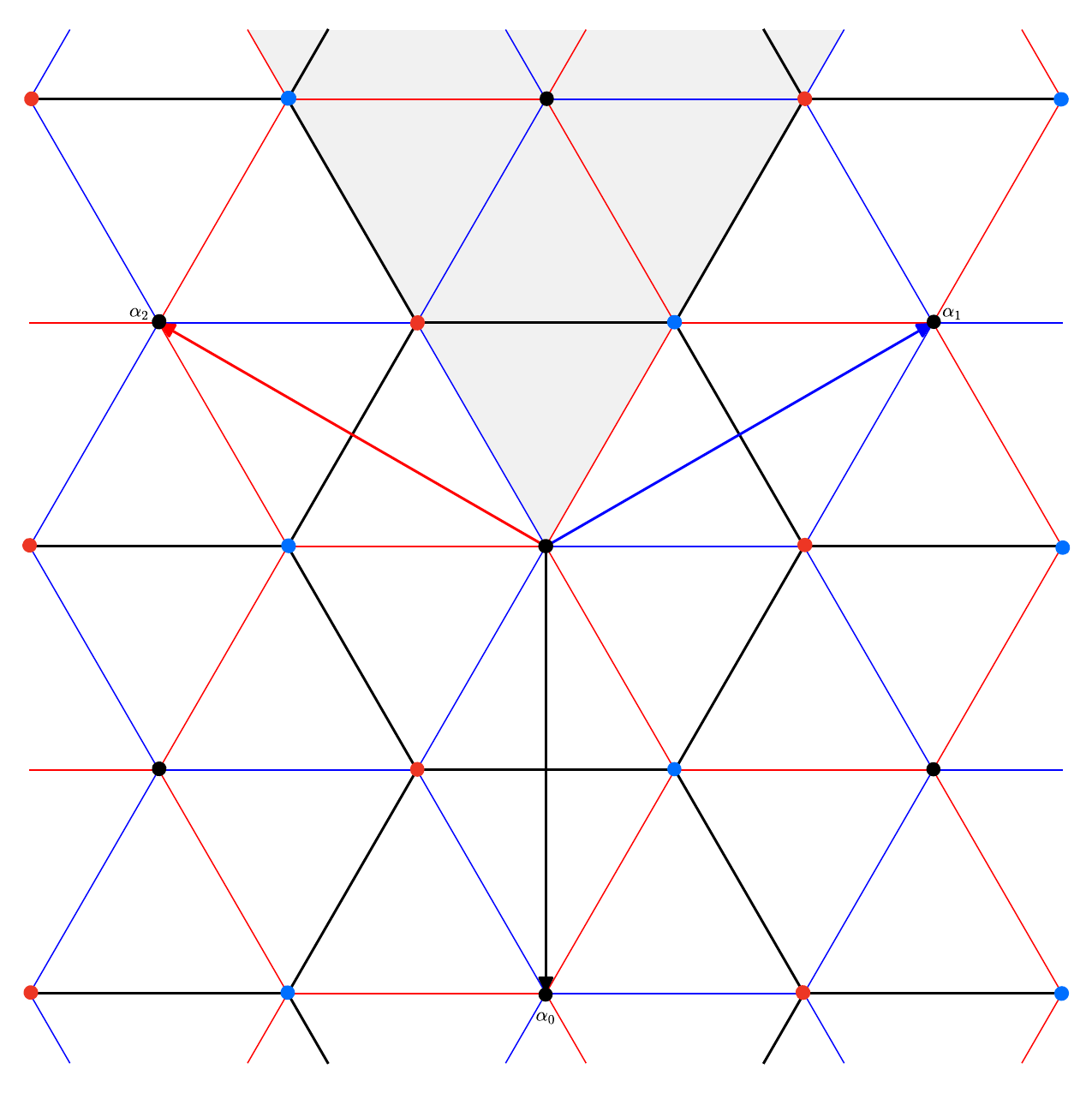}
	\put(49.5,55){{0}}
	\put(49.5,45){{0}}
	\put(45,47){{0}}
	\put(45,52){{0}}
	\put(54,47){{0}}
	\put(54,52){{0}}

	\put(49.5,95){{0}}
	\put(49.5,85){{0}}
	\put(45,87){{0}}
	\put(45,92){{0}}
	\put(54,87){{0}}
	\put(54,92){{0}}

	\put(44,21){\parbox{1.5cm}{$(\alpha_1+\alpha_2) \\ (\alpha_1+\alpha_2+\delta)$}} 
	\put(31,5){\parbox{1.5cm}{$(\alpha_1+\alpha_2) \\ (\alpha_1+\alpha_2+\delta)$}} 
	\put(31,12){\parbox{1.5cm}{$(\alpha_1+\alpha_2) \\ (\alpha_1+\alpha_2+\delta)$}} 
	\put(54,5){\parbox{1.5cm}{$(\alpha_1+\alpha_2) \\ (\alpha_1+\alpha_2+\delta)$}} 
	\put(54,12){\parbox{1.5cm}{$(\alpha_1+\alpha_2) \\ (\alpha_1+\alpha_2+\delta)$}} 

	\put(81,40){\parbox{1.5cm}{$(\alpha_1+\alpha_2) \\ (\alpha_2 + \delta)  $}} 
	\put(71,33){\parbox{1.5cm}{$(\alpha_1+\alpha_2) \\ (\alpha_2 + \delta) $}} 
	\put(71,25){\parbox{1.5cm}{$(\alpha_1+\alpha_2) \\ (\alpha_2 + \delta) $}} 
	\put(81,16){\parbox{1.5cm}{$(\alpha_1+\alpha_2) \\ (\alpha_2 + \delta) $}} 

	\put(10,40){\parbox{1.5cm}{$(\alpha_1) \\ (\alpha_1 + \delta)  $}} 
	\put(20,33){\parbox{1.5cm}{$(\alpha_1) \\ (\alpha_1 + \delta) $}} 
	\put(20,25){\parbox{1.5cm}{$(\alpha_1) \\ (\alpha_1 + \delta) $}} 
	\put(10,16){\parbox{1.5cm}{$(\alpha_1) \\ (\alpha_1 + \delta) $}} 

	\put(10,85){\parbox{1.5cm}{$(\alpha_1) \\ (-\alpha_2 + \delta)  $}} 
	\put(20,76){\parbox{1.5cm}{$(\alpha_1) \\ (-\alpha_2 + \delta) $}} 
	\put(20,66){\parbox{1.5cm}{$(\alpha_1) \\ (-\alpha_2 + \delta) $}} 
	\put(10,58){\parbox{1.5cm}{$(\alpha_1) \\ (-\alpha_2 + \delta) $}} 

	\put(81,85){\parbox{1.5cm}{0}} 
	\put(71,76){\parbox{1.5cm}{$0$}} 
	\put(71,66){\parbox{1.5cm}{$0 $}} 
	\put(81,58){\parbox{1.5cm}{$0$}} 

	\put(79,94){\parbox{2cm}{$(-\alpha_1+2\delta) \\ (-\alpha_1 -\alpha_2 + 2\delta) $}} 

	\end{overpic}
	\caption{The function $\psi^{s_{10}}$ in affine type $A_2$ evaluated at each alcove.}
	\label{fig:class_s_10}
	\end{center}
\end{figure}

\begin{theorem}\label{thm:Local}
Let $w,v \in W_{af}$. Then the values of the function $\xi^v$ are given by
\begin{equation*}
\xi^v(w) \; = \sum_{\substack{|\varepsilon|=\ell(v) \\ w^{\varepsilon} = v}}\Psi_\gamma^{\varepsilon} \ = \; \psi^v(w),
\end{equation*} 
where $\gamma$ is any (not necessarily minimal) alcove walk to $\w$.
As such, the function $\psi^v$ is the image of the affine Schubert class $[X_v]$ under the localization map \eqref{eq:AffLoc}.
\end{theorem}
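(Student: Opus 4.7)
The plan is to reduce the general statement to the special case of a minimal length alcove walk, which then matches a classical formula for $\xi^v(w)$, and to leverage Proposition \ref{prop:Indep} to pass between walks. Concretely, first I would fix a reduced expression $w = s_{i_1}\cdots s_{i_\ell}$ and take $\gamma = \gamma_w$ to be the associated minimal alcove walk. Every step of $\gamma_w$ is forward by construction, so the hyperplane crossed at step $j$ is indexed by the positive affine root $\beta_j = s_{i_1}\cdots s_{i_{j-1}}\alpha_{i_j}$ from Equation \eqref{E:betaj}. A mask $\varepsilon$ satisfying $|\varepsilon| = \ell(v)$ and $w^\varepsilon = v$ is then exactly a reduced subword for $v$ inside the reduced expression for $w$, and under these identifications
\[
\Psi_{\gamma_w}^\varepsilon \;=\; \prod_{\varepsilon_j=1}\beta_j.
\]
Summing over all such masks recovers precisely Billey's formula for $\xi^v(w)$, in the Kac-Moody generalization established via the Kumar appendix argument referenced in Section \ref{sec:GKMintro} and in \cite{KostantKumar}. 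This yields $\psi^v(w) = \xi^v(w)$ for minimal $\gamma$.

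The second step is to remove the minimality hypothesis. By Proposition \ref{prop:Indep}, the polynomial $\psi^v(w)$ is independent of which alcove walk to $\w$ is used in its computation. Consequently, the value of $\psi^v(w)$ computed from any (possibly non-minimal) walk $\gamma$ agrees with the value computed from the minimal walk $\gamma_w$, which has already been identified with $\xi^v(w)$. The final sentence of the theorem, asserting that $\psi^v$ is the image of $[X_v]$ under \eqref{eq:AffLoc}, then follows from the standard identification $[X_v] \mapsto \xi^v$ recalled in Section \ref{sec:GKMintro}.

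The bulk of the genuine combinatorial work is already absorbed into Proposition \ref{prop:Indep}, whose braid-move analysis handles the cancellations coming from non-reduced subexpressions (in particular, the doubled generators $ss$). The main conceptual point to verify carefully in the first step is that the sign convention $(\beta_{i_j}+k_{i_j}\delta)^\pm$ in Definition \ref{def:Psi-gamma} specializes to positive affine roots throughout a minimal walk, so that the product $\Psi_{\gamma_w}^\varepsilon$ matches Billey's formula literally and not just up to sign; this is automatic because every step of a reduced walk is forward. I do not anticipate a serious obstacle beyond this bookkeeping, since the novelty of Theorem \ref{thm:Local} lies less in the identification with $\xi^v$ for reduced walks and more in the walk-independence, which Proposition \ref{prop:Indep} supplies.
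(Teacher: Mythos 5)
Your proposal is correct, but it takes a genuinely different route from the paper, one which the authors themselves flag (in the remark following Theorem \ref{thm:Local}) as a viable alternative they deliberately chose not to pursue. You reduce to the minimal-walk case, observe that every step of $\gamma_w$ is forward so that $\Psi_{\gamma_w}^\varepsilon$ is literally a product of the inversion roots $\beta_j$ from \eqref{E:betaj}, identify the resulting sum with Billey's formula in its Kac-Moody form (Lemma 1.3.14 and Proposition 11.1.11 of \cite{Kumar}, or the Kumar argument in the appendix of \cite{Billey}), and then invoke Proposition \ref{prop:Indep} to drop minimality. The paper instead never consumes Billey's formula: it verifies directly from the alcove-walk definition that $\psi^v(e)=\delta_{v,e}$ and that $\psi^v$ satisfies the recursion \eqref{eq:recursion}, which by Lemma 11.1.8 of \cite{Kumar} uniquely characterizes the Kostant--Kumar functions $\xi^v$, and then applies Proposition 11.3.10 of \cite{Kumar} for the identification with $[X_v]$ under \eqref{eq:AffLoc}; the recursion is proved by appending or deleting a single crossing at the end of a walk, again using Proposition \ref{prop:Indep} to choose convenient walks. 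What your route buys is brevity: the only new ingredient is walk-independence, with the minimal case outsourced to known localization formulas. What the paper's route buys is self-containedness within the alcove-walk framework --- Billey's formula becomes a corollary of Theorem \ref{thm:Local} rather than an input --- and it exercises the concatenation technique that also drives the proof of Theorem \ref{thm:GKM}, which is unavailable when one insists on minimal walks. One small point to make airtight in your write-up: since the paper defines $\xi^v$ geometrically as the localization of $[X_v]$, you should cite the Kac-Moody Billey formula in exactly that normalization (i.e., for the opposite Schubert varieties $X_v=\overline{I^-vI/I}$ and the big torus $T_{af}$), which is what Proposition 11.1.11 together with Proposition 11.3.10 of \cite{Kumar} supplies; with that citation pinned down, your argument is complete.
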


\begin{proof}
For any $w,v \in W_{af}$, we must argue that $\xi^v(w) = \psi^v(w)$.  It suffices to prove that the polynomials $\psi^v(w) \in \ZZ[\alpha_0,\alpha_1, \dots, \alpha_{n-1}]$ satisfy both $\psi^v(e) = \delta_{v,e}$ where $\delta_{x,y}$ for $x,y \in W_{af}$ denotes the Kronecker delta function, as well as the following recursion for any $i \in I_{af}$
\begin{equation}\label{eq:recursion}
\psi^v(ws_i) = 
\begin{cases}
\psi^v(w) + (w\alpha_i)\psi^{vs_i}(w) & \text{if}\ vs_i<v \\
\psi^v(w) & \text{otherwise},
\end{cases}
\end{equation}
as these properties uniquely determine the function values $\xi^v(w)$ by Lemma 11.1.8 of \cite{Kumar}.  

When $w=e$, by Proposition \ref{prop:Indep} we can consider the trivial walk $\gamma_0 = (\base_0)$ to $\w = \base_0$.  For any $v \neq e$, there are no masks $\varepsilon$ such that $w^\varepsilon = v$, and so $\psi^v(e) = 0$. In case $v = e$ as well, the empty mask $\varepsilon_\emptyset$ satisfies $|\varepsilon_\emptyset| = \ell(v)$ and $w^{\varepsilon_\emptyset} = v$.  Since $\gamma_0$ does not cross any hyperplanes, we have $\Psi^{\varepsilon_\emptyset}_{\gamma_0} = \psi^e(e) = 1$ in this case.  Altogether, we thus have $\psi^v(e) = \delta_{v,e}$.

To prove the recursion \eqref{eq:recursion}, we split the argument into two cases.  First consider the case where $ws_i > w$.   We first fix a walk $\gamma$ of length $m$ to $\w$, which we may choose to be minimal by Proposition \ref{prop:Indep}, and we define the walk $\gamma'$ to be the same as $\gamma$, with one final step across the panel of $\w$ having type $i$.  Because $ws_i > w$, the walk $\gamma'$ remains minimal, and this final step from $\w$ to the alcove $ws_i \base_0$ is a forward step, labeled by the affine root $w\alpha_i \in R^+_{af}$.

To compute $\psi^v(ws_i)$ in case $ws_i > w$, we calculate $\Psi^\varepsilon_{\gamma'}$ for all masks $\varepsilon \in \{0,1\}^{m+1}$ such that $|\varepsilon| = \ell(v)$ and $(ws_i)^\varepsilon = v$.  If $vs_i>v$, then no reduced expression for $v$ ends in $s_i$, and therefore all masks such that both $|\varepsilon| = \ell(v)$ and  $(ws_i)^\varepsilon = v$ are fully supported on $\gamma$ and do not show this final hyperplane crossing in $\gamma'$.  In other words, if $vs_i > v$, then $\psi^v(ws_i) = \psi^v(w)$, as required by \eqref{eq:recursion}.

If instead $vs_i<v$, then there exist reduced expressions for $v$ which do end in $s_i$.  Grouping together all masks of minimal support such that $(ws_i)^\varepsilon = v$ which hide the final hyperplane crossing in $\gamma'$, we obtain $\psi^v(w)$ as before.  Now consider all masks $\varepsilon'$ of minimal support such that $(ws_i)^{\varepsilon'} = v$ which show the final hyperplane crossing in $\gamma'$.  As all such masks show this final crossing, the affine root $w\alpha_i$ is a factor in all summands $\Psi^{\varepsilon'}_{\gamma'}$. Otherwise, the mask $\varepsilon'$ is supported fully on $\gamma$ and corresponds to a mask $\overline{\varepsilon} \in \{0,1\}^m$ obtained by omitting the final 1 in the sequence $\varepsilon'$.  Moreover, since $vs_i<v$, then the mask $\overline{\varepsilon}$ satisfies both $|\overline{\varepsilon}| = \ell(vs_i) $ and $w^{\overline{\varepsilon}} = vs_i$.  In other words, 
\[ 
\psi^v(ws_i) =  \sum_{\substack{|\varepsilon|=\ell(v) \\ (ws_i)^{\varepsilon} = v}}\Psi_{\gamma'}^{\varepsilon} \ = \psi^v(w) +(w\alpha_i) \sum_{\substack{|\overline{\varepsilon}|=\ell(vs_i) \\ w^{\overline{\varepsilon}} = vs_i}}\Psi_\gamma^{\overline{\varepsilon}} \ = \psi^v(w) + (w\alpha_i)\psi^{vs_i}(w),
\]
verifying \eqref{eq:recursion} in case both $ws_i>w$ and $vs_i<v$.

It remains to consider the case where $ws_i< w$.  Fix a walk $\gamma$ of minimal length $m$ to $\w$ such that $\type(\gamma) = s_{i_1}\cdots s_{i_{m-1}}s_i$, meaning that the final crossing of $\gamma$ has type $i$.  To compute $\psi^v(ws_i)$, by Proposition \ref{prop:Indep} we may choose to define an alcove walk $\overline{\gamma}$ from $\base_0$ to $ws_i\base_0$ to be the same as $\gamma$, with the final crossing of type $i$ omitted.   

If $vs_i>v$, then no reduced expression for $v$ ends in $s_i$, and all masks $\overline{\varepsilon} \in \{0,1\}^{m-1}$ for $\overline{\gamma}$ such that $|\overline{\varepsilon}| = \ell(v)$ and $(ws_i)^{\overline{\varepsilon}} = v$ correspond to a unique mask $\varepsilon \in \{0,1\}^m$ for $\gamma$ obtained by adding a final 0 to the sequence $\overline{\varepsilon}$. As this final zero hides the last crossing, we again have $|\varepsilon| = \ell(v)$ and $w^\varepsilon = v$.  Moreover, all hyperplane crossings shown by $\overline{\varepsilon}$ and $\varepsilon$ coincide, and so $\psi^v(ws_i) = \psi^v(w)$ in this case.

Now consider the final case where $vs_i<v$.  Here we instead compute $\psi^v(w)$ by summing $\Psi^\varepsilon_\gamma$ over masks $\varepsilon \in \{0,1\}^m$ such that $|\varepsilon| = \ell(v)$ and $w^\varepsilon = v$.  Grouping together all such masks which hide the final hyperplane crossing in $\gamma$, we obtain $\psi^v(ws_i)$ using the correspondence  obtained by deleting the final entry of $\varepsilon$.  All other masks $\varepsilon' \in \{0,1\}^m$ of minimal support such that $w^{\varepsilon'} = v$ will show the final hyperplane crossing in $\gamma$.  Since $w = s_{i_1}\cdots s_{i_{m-1}}s_i$ is a reduced expression, then $s_{i_1}\cdots s_{i_{m-1}}(\alpha_i) \in R^+_{af}$ by \eqref{E:betaj}. Since $s_i\alpha_i = -\alpha_i$, then $-s_{i_1}\cdots s_{i_{m-1}}s_i(\alpha_i) = -w\alpha_i \in R^+_{af}$.  As the final crossing $\w\textbf{s}_i \to \w$ of $\gamma$ is a forward step, it is labeled by $-w\alpha_i \in R^+_{af}$.  As all remaining masks show this crossing, the affine root $-w\alpha_i$ is a factor in all summands of the form $\Psi^{\varepsilon'}_\gamma$.  Now taking $\varepsilon'$ and switching the final 1 to 0, we obtain a unique mask $\overline{\varepsilon} \in \{0,1\}^m$ such that $|\overline{\varepsilon}| = \ell(v)-1$ and $w^{\overline{\varepsilon}}=vs_i$.  In other words, the remaining masks $\varepsilon'$ which show the final crossing of $\gamma$ combine to produce $(-w \alpha_i)\psi^{vs_i}(w)$.  Putting these two collections of masks together, we have 
\[ \psi^v(w) = \psi^v(ws_i) - w\alpha_i\psi^{vs_i}(w), \]
verifying \eqref{eq:recursion} in the remaining case that both $ws_i<w$ and $vs_i < v$.

Finally, having proved that $\psi^v = \xi^v$, the fact that the function $\psi^v$ is the image of the affine Schubert class $[X_v]$ under the localization map \eqref{eq:AffLoc} now follows from Proposition 11.3.10 of \cite{Kumar}.
\end{proof}

We now provide several examples to illustrate Theorem \ref{thm:Local}, in addition to making some remarks which connect Theorem \ref{thm:Local} to related literature.

\begin{figure}
\begin{center}

	\begin{overpic}[scale=0.4]{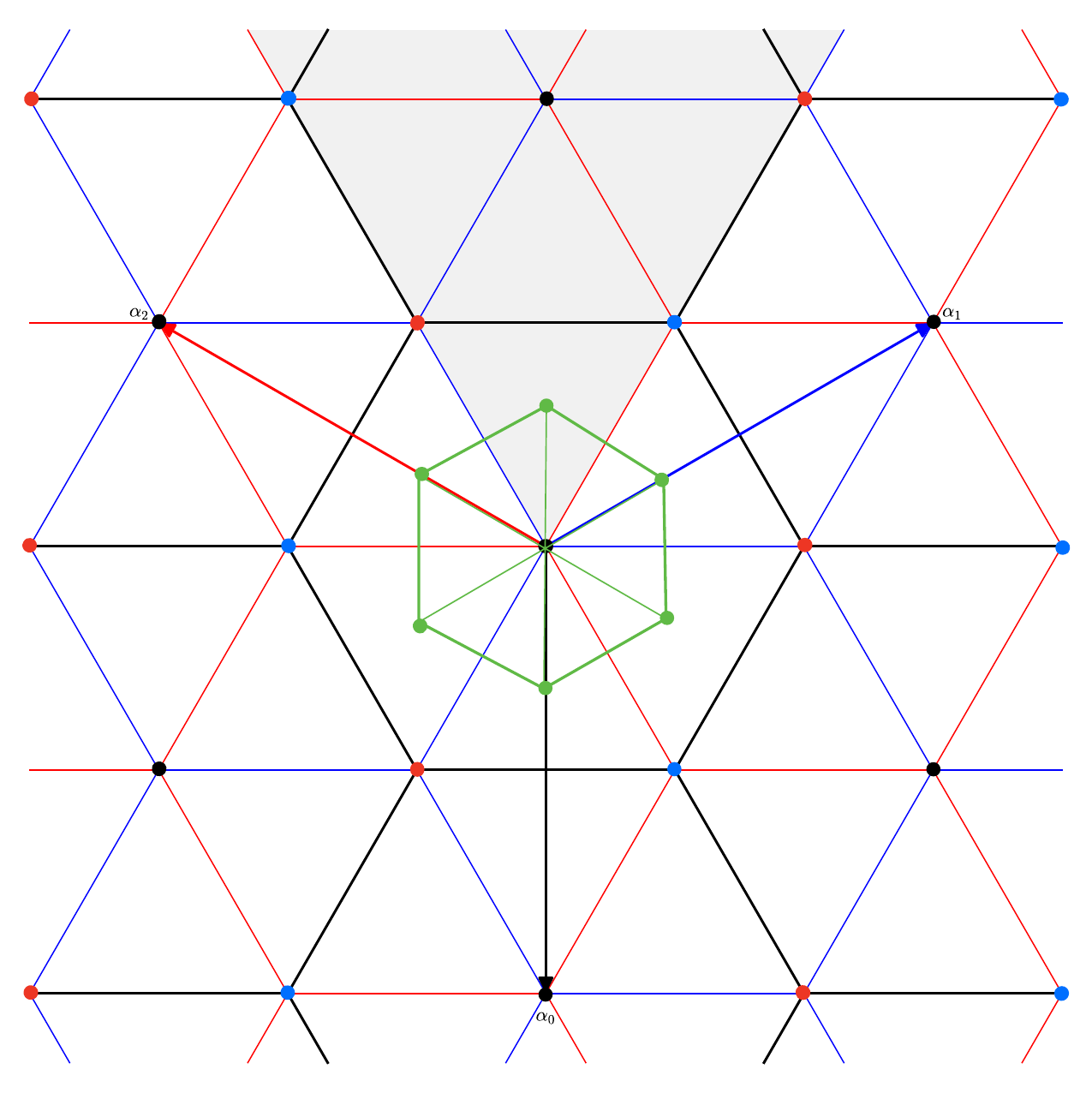}
	\put(61,62){{0}}
	\put(55,66){{0}}
	\put(55,73){{0}}
	\put(61,76){0}
	\put(67,66){{0}}
	\put(67,73){{0}}
	
	\put(58,38){{$\alpha_1+\alpha_2$}}
	\put(51,32){{$\alpha_1+\alpha_2$}}
	\put(51,26){{$\alpha_1+\alpha_2$}}
	\put(65,32){{$\alpha_1+\alpha_2$}}
	\put(65,26){{$\alpha_1+\alpha_2$}}
	\put(58,20){{$\alpha_1+\alpha_2$}}

	\put(32,46){{$\alpha_1$}}
	\put(32,53){{$\alpha_1$}}
	\put(18,46){{$\alpha_1$}}
	\put(18,53){{$\alpha_1$}}
	\put(25,57){{$\alpha_1$}}
	\put(25,42){{$\alpha_1$}}
	
	\put(30,88){{$-\alpha_2+\delta$}}
	\put(30,93){{$-\alpha_2+\delta$}}
	\put(14,88){{$-\alpha_2+\delta$}}
	\put(14,93){{$-\alpha_2+\delta$}}
	\put(22,81){{$-\alpha_2+\delta$}}

	\put(87,46){{$\alpha_2 + \delta$}}
	\put(87,53){{$\alpha_2 + \delta$}}
	
	\put(54,93){{$-\alpha_1 - \alpha_2 + 2\delta$}}
	
	\put(84,88){{$-\alpha_1 + 2\delta$}}
	\put(84,93){{$-\alpha_1 + 2\delta$}}

	\put(30,11){{$2\alpha_1 + \alpha_2 + \delta$}}
	\put(10,11){{$2\alpha_1 + \alpha_2 + \delta$}}
	\put(20,22){{$2\alpha_1 + \newline \alpha_2 + \delta$}}
	\put(30,6){{$2\alpha_1 + \alpha_2 + \delta$}}
	\put(10,6){{$2\alpha_1 + \alpha_2 + \delta$}}
	
	\put(81,11){{$\alpha_1 + 2\alpha_2 + \delta$}}
	\put(81,6){{$\alpha_1 + 2\alpha_2 + \delta$}}

	\end{overpic}
	\caption{The function $\psi^{s_1}$ in affine type $A_2$ evaluated at each alcove.}
	\label{fig:class_s_1}
	\end{center}
\end{figure}

\begin{example0}
The affine Schubert class $\psi^{s_1}$ obtained by applying Theorem \ref{thm:Local} is shown in Figure \ref{fig:class_s_1}.  We have labeled the values $\psi^{s_1}(w)$ for $w \in W_{af}$ to emphasize the symmetry around the blue vertices (not the origin).  We have also included in green a portion of the dual picture used in the traditional approach to GKM theory, obtained by placing a vertex in the center of each alcove, and connecting each vertex with an edge passing through any common face.  Labeling the six green vertices by the values shown in the corresponding alcove recovers the Schubert class $\sigma_{s_1}$ in the small-torus equivariant cohomology of $SL_3(\CC)/B$ depicted in Figure 1 of \cite{TymBilley}, illustrating how to recover classical results from Theorem \ref{thm:Local}.
\end{example0}

\begin{remark}
Theorem \ref{thm:Local} can also be viewed as providing a geometric interpretation of Billey's formula for calculating the localizations $\xi^v(w)$; see \cite{Billey} or the survey \cite{TymBilley} for explicit formulas. This connection is immediate by identifying the affine roots appearing in Billey's formula as inversions of $w \in W_{af}$ as in Lemma 1.3.14 of \cite{Kumar}, and then applying Proposition 11.1.11 of \cite{Kumar}.  Of course, proving that the functions $\psi^v$ coincide with $\xi^v$ could have thus proceeded by directly verifying that $\psi^v(w)$ reproduces Billey's formula.  

However, all previously known instances of Billey's formula require the initial choice of expression for $w$ to be minimal length, and this hypothesis is dropped in calculating $\psi^v(w)$ using the alcove walk method presented in this paper.  We have thus pursued the methods of proof that we believe to be best suited for the geometric packaging of alcove walks.  In particular, the technique of concatenating alcove walks as illustrated by Figure \ref{fig:sbetaw}, which regularly occurs in the proofs of Theorems \ref{thm:GKM} and \ref{thm:Local}, is not available in the context where a minimal walk to $\w$ is required.
\end{remark}

\begin{remark}
Functions satisfying the following three properties are called \emph{Knutson-Tao classes} in \cite{Tymoczko-AJM}, after the work of \cite{KnutsonTao}:
\begin{enumerate}
\item $\xi^v(w) = 0$ unless $v \leq w$,
\item $\xi^v(w)$ is homogeneous of degree $\ell(v)$,
\item $\xi^v(v) = \prod\limits_{\beta \in R^+_{af}, s_{\beta} v < v} \beta$. 
\end{enumerate} 
In case $v,w$ are elements of the finite Weyl group $W$, Lemma 2.16 of \cite{Tymoczko-AJM} says that any two Knutson-Tao classes indexed by $v \in W$ coincide.  As the Schubert basis for the finite flag variety is uniquely determined by these three properties, any system of Knutston-Tao classes thus coincides with the Schubert basis for $G/B$. 

It is clear by definition that the functions $\psi^v$ defined in terms of alcove walks satisfy all three of these properties; see Figures \ref{fig:class_s_10} and \ref{fig:class_s_1} for illustrations. However, the proofs in \cite{Tymoczko-AJM}, as well as similar results in \cite{GZ-Advances}, rely crucially on the finiteness of the graph.
To the best of our knowledge, a precise reference to an affine version of Lemma 2.16 from \cite{Tymoczko-AJM} is not available. 
See Proposition 4.3 of \cite{HHH} for a more general version, or Proposition 4 of \cite{LSkDouble} for a special case.    Based on a private communication \cite{LSprivate}, we believe that the proof of \cite[Prop.~4]{LSkDouble} goes through in the level of generality required to verify the requisite type-free, big-torus statement, and we encourage the interested reader to carry out this exercise.  
\end{remark}


\section{Labeled folded alcove walks and GKM theory}\label{sec:folded}

In this section we adapt the treatment of Parkinson, Ram, and Schwer in \cite{PRS} to provide a bijection in Theorem \ref{thm:AlcoveWalks} between the set of alcove walks which are folded according to certain rules and the points in the intersection of two different affine Schubert cells.  We conclude by discussing how such folded alcove walks are related to the summands of $\psi^v(w)$, which provide the localizations of the affine Schubert class $[X_v]$ at the $T_{af}$-fixed point indexed by $w$, according to Theorem \ref{thm:Local}.

\subsection{Positively folded alcove walks}\label{sec:positive}

 We now define an orientation on every affine hyperplane $H_\beta$, which separates $V^* \cong \RR^n$ into two halves.  Each side of $H_\beta$ is either positive or negative, as follows.
Given any $\beta \in R_{af}$, orient the hyperplane $H_{\beta}$ such that the base alcove $\base_0$ is on the \emph{positive} side of $H_\beta$.  The side of $H_\beta$ which is separated from $\base_0$ is defined to be the \emph{negative} side of $H_\beta$. The orientations for the three families of hyperplanes in affine type $A_2$ are indicated by $\pm$ in Figure \ref{fig:fold1}.
 In \cite{GraeberSchwer}, this is called the orientation induced by the base alcove $\base_0$.
 
 \begin{figure}[ht]
	\centering
	\begin{overpic}[scale = 0.35]{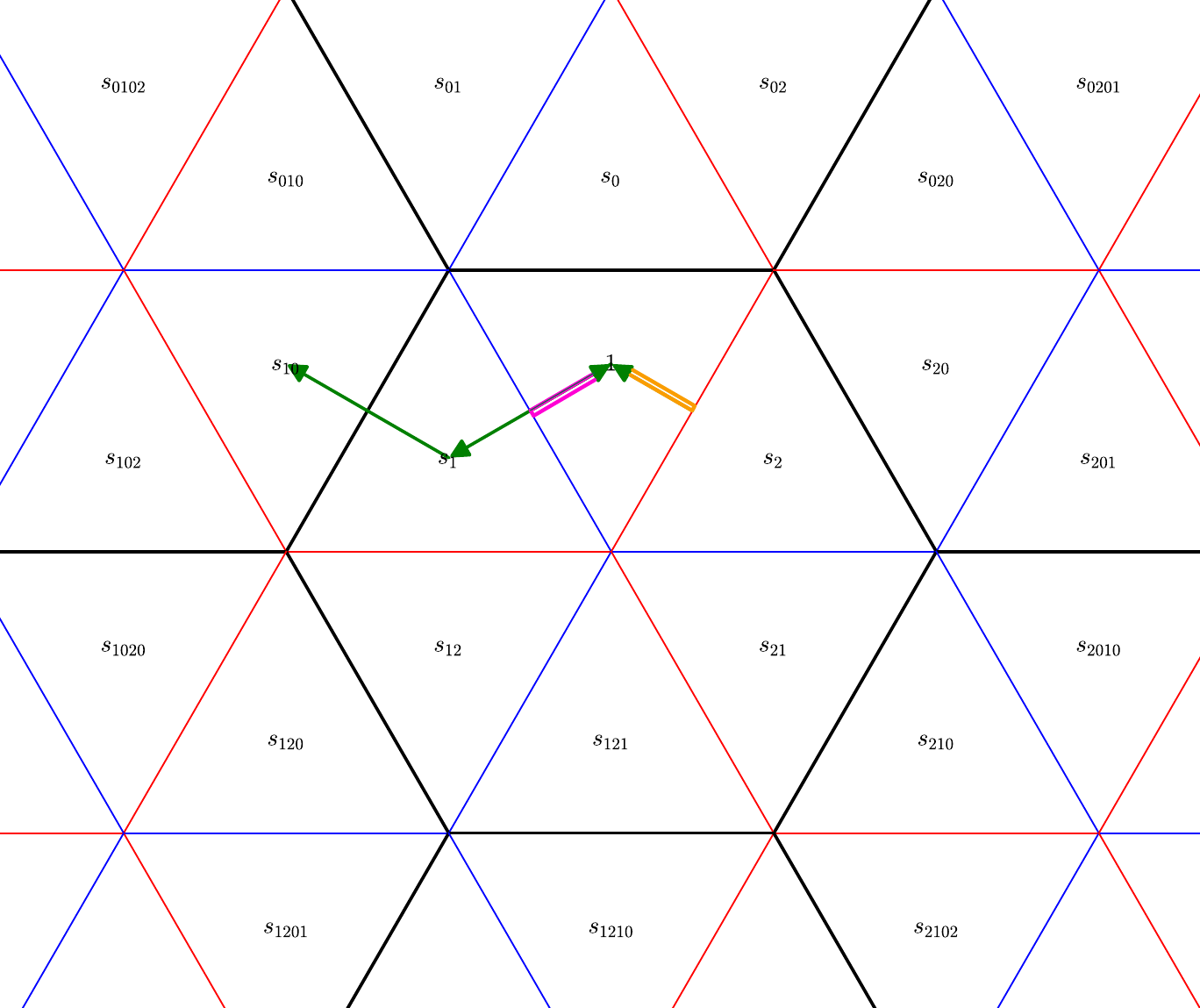}
	\put(74,76){{$\red{H_{\alpha_2 }}$}}
	\put(64,4){{$\blue{H_{\alpha_1 }}$}}
	\put(18,1){{\blue{$+$}}}
	\put(13.5,1){{\blue{$-$}}}		
	\put(45,1){{\blue{$+$}}}	
	\put(40.5,1){{\blue{$-$}}}	
	\put(73,1){{\blue{$+$}}}	
	\put(68.5,1){{\blue{$-$}}}	
	\put(96.5,1){{\blue{$+$}}}
	\put(96.5,15.5){{$+$}}		
	\put(96.5,11.5){{$-$}}	
	\put(96.5,39.5){{$+$}}		
	\put(96.5,35.5){{$-$}}	
	\put(96.5,62.5){{$-$}}		
	\put(96.5,58.5){{$+$}}				
	\put(22,80){{\red{$+$}}}
	\put(17.5,80){{\red{$-$}}}
	\put(50,80){{\red{$+$}}}
	\put(45.5,80){{\red{$-$}}}
	\put(77,80){{\red{$-$}}}
	\put(72.5,80){{\red{$+$}}}
	\put(98,80){{\red{$+$}}}				
	\end{overpic}
	\caption{The positively folded alcove walk of type $w = s_{1210}$ corresponding to the mask $(0,0,1,1)$ for $v = s_{10}$, having folds in the first (pink) and second (orange) steps.}
	\label{fig:fold1}
\end{figure}

Any orientation on hyperplanes then gives rise to an orientation on each step of an alcove walk. 
An alcove walk $\gamma = (\base_0, \base_1, \dots, \base_m)$ has a \emph{positive crossing} at step $j$ if $\gamma$ crosses from the negative side of the hyperplane $H_{\beta_j+k_j\delta}$ to the positive side, meaning that the alcove $\base_{j-1}$ is on the negative side of  $H_{\beta_j+k_j\delta}$ while $\base_j$ is on the positive side.  Conversely, $\gamma$ has a \emph{negative crossing} at step $j$ if $\gamma$ crosses from the positive side of $H_{\beta_j+k_j\delta}$ to the negative side. The two green steps in the alcove walk shown in Figure \ref{fig:fold1} are negative crossings.

For any $\alpha_j \in \Delta_{af}$ and $v \in W_{af}$, the notion of positivity on the affine roots relates to this orientation on hyperplane crossings as follows:
\begin{equation}\label{E:HRootLabelsI}
v \alpha_j \in R^+_{af} \quad \iff \quad \ell(v) < \ell(vs_j) \quad  \iff  \quad \textbf{v} \to \textbf{vs}_j \ \text{is negative.}
\end{equation}
Since $R_{af} = R^+_{af} \sqcup R^-_{af}$, then we also have
\begin{equation}\label{E:HRootLabelsI-}
v \alpha_j \in R^-_{af} \quad \iff \quad \ell(vs_j) < \ell(v) \quad  \iff  \quad \textbf{v} \to \textbf{vs}_j \ \text{is positive.}
\end{equation}

Given any alcove walk $\gamma = (\base_0, \base_1, \dots, \base_m)$, we say that $\gamma$ is \emph{folded at step $j$} if $\base_{j-1} = \base_j$.
Folded alcove walks also inherit an orientation from the relevant hyperplanes. 
If the alcove $\base_{j-1}$ is on the positive side of $H_{\beta_j+k_j \delta}$, we say that $\gamma$ is \emph{positively folded at step} $j$. We say that the alcove walk $\gamma$ is \emph{positively folded} if for all $1 \leq j \leq m$ such that $\gamma$ is folded at step $j$, then $\gamma$ is positively folded at step $j$.  In Figure \ref{fig:fold1}, both the first fold (in pink) and the second fold (in orange) occur on the positive side of their respective hyperplanes, and thus the alcove walk in Figure \ref{fig:fold1} is positively folded.

\subsection{Labeled alcove walks and affine Schubert cells}\label{sec:gpthy}

To connect to the group theory in the affine flag variety, as we do in this section, it will be necessary to assume that the underlying expression for $w = s_{i_1}\cdots s_{i_\ell} \in W_{af}$ is reduced throughout the remainder of the paper.

 For any $\beta \in R_{af}$, denote the corresponding one-parameter root subgroup by $\mathfrak{X}_{\beta} = \{ x_{\beta}(c) \mid c \in \CC\}$, and write $x_i(c) = x_{\alpha_i}(c)$ for the positive simple affine roots $\alpha_i \in \Delta_{af}$.  Note that $\mathfrak{X}_\beta$ is a subgroup of $I$ for $\beta \in R^+_{af}$, and that $\mathfrak{X}_\beta$ is a subgroup of $I^-$ for $\beta \in R^-_{af}$. Denote by $n_i = n_{\alpha_i}$ a lift to $G(F)$ of the simple reflection $s_i \in W_{af}$ for $i \in I_{af}$.  Following \cite[Eq.~3.3]{PRS}, for any $\beta \in R_{af}$ and $c \in \CC$, we define
 \begin{equation*}
 n_\beta(c) = x_\beta(c)x_{-\beta}(-c^{-1})x_\beta(c) \quad \text{and} \quad h_\beta(c) = n_\beta(c)n_\beta(1)^{-1}.
 \end{equation*}

\begin{figure}
	\centering
	\begin{overpic}[scale = 0.385]{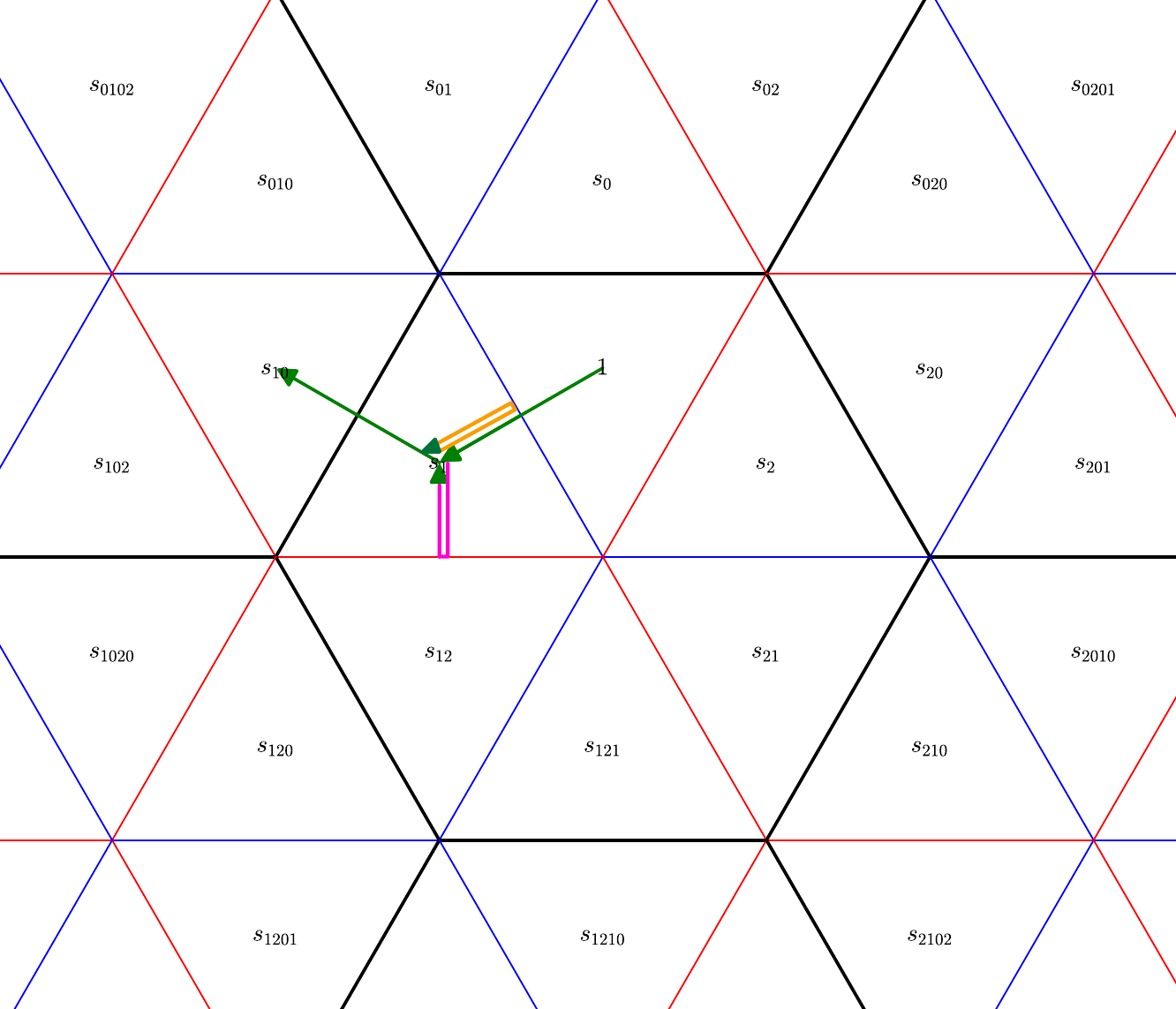}
	\put(87,40){{$H_{\alpha_1 + \alpha_2 }$}}. 
	\put(64.5,4){{$\blue{H_{\alpha_1 }}$}}
	\put(18,1){{\blue{$+$}}}
	\put(13.5,1){{\blue{$-$}}}		
	\put(45,1){{\blue{$+$}}}	
	\put(40.5,1){{\blue{$-$}}}	
	\put(73,1){{\blue{$+$}}}	
	\put(68.5,1){{\blue{$-$}}}	
	\put(96.5,1){{\blue{$+$}}}
	\put(96.5,15.5){{$+$}}		
	\put(96.5,11.5){{$-$}}	
	\put(96.5,39.5){{$+$}}		
	\put(96.5,35.5){{$-$}}	
	\put(96.5,62.5){{$-$}}		
	\put(96.5,58.5){{$+$}}				
	\put(22,80){{\red{$+$}}}
	\put(17.5,80){{\red{$-$}}}
	\put(50,80){{\red{$+$}}}
	\put(45.5,80){{\red{$-$}}}
	\put(77,80){{\red{$-$}}}
	\put(72.5,80){{\red{$+$}}}
	\put(98,80){{\red{$+$}}}	
	\put(45,49){{$0$}}
	\put(38,39.5){{$c_1 \in \CC^\times$}}
	\put(32.5,52){{$c_2 \in \CC^\times$}}
	\put(26,49){{$0$}}
	\end{overpic}
	\caption{The labeled folded alcove walk of type $w = s_{1210}$ corresponding to the mask $(1,0,0,1)$ for $v = s_{10}$, having folds in the second (pink) step labeled $c_1$ and the third (orange) step labeled $c_2$.}
	\label{fig:fold2}
\end{figure}

There are two convenient ways to represent those points in the affine flag variety $G/I$ which lie in the double coset $IwI$.  Given any reduced expression $w = s_{i_1}\cdots s_{i_\ell}$, we have
\begin{equation}\label{E:IwIpoints1}
IwI = \left\{ x_{i_1}(c_1)n_{i_1}^{-1}x_{i_2}(c_2)n_{i_2}^{-1}\cdots x_{i_\ell}(c_\ell)n_{i_\ell}^{-1}I \ \middle|\ c_j \in \CC \right\};
\end{equation}
see \cite[Lemma 7.4]{Ronan}.
Recall that we can also associate this reduced expression for $w$ with the alcove walk $\gamma_w = (\base_0, \base_1, \dots, \base_\ell)$ such that $\type(\gamma_w) = s_{i_1}\cdots s_{i_\ell}$.
Considering \eqref{E:IwIpoints1}, the points of $G/I$ which lie in $IwI$ are in bijection with choices for the field elements $c_i \in \CC$.  We view these parameters as \emph{labels} on the crossings of the alcove walk $\gamma_w$ as follows:
\begin{itemize}
\item If $\gamma$ has a positive crossing at step $j$, this step is labeled by an element of $\CC$.
\item If $\gamma$ has a negative crossing at step $j$, this step is labeled by $0$.
\item If $\gamma$ has a positive fold at step $j$, then this step is labeled by an element of $\CC^\times$.
\end{itemize}
We refer to $\gamma_w$ as a \emph{labeled} alcove walk.  The corresponding labels on the folded alcove walk in Figure \ref{fig:fold2} are shown.

An alternative way to write the points of $G/I$ which lie in $IwI$ is given by \cite[Theorem 4.1]{PRS}, which says that
\begin{equation}\label{E:IwIpoints2}
IwI = \{ x_{\beta_1}(c_1)x_{\beta_2}(c_2)\cdots x_{\beta_\ell}(c_\ell)n_wI \mid c_j \in \CC \},
\end{equation}
where $n_w = n_{i_1}^{-1} \cdots n_{i_\ell}^{-1}$, and the roots $\beta_j \in R^+_{af}$ are the elements of $\inv(w)$ defined in \eqref{E:betaj}; see also Theorem 15 and Lemma 43 in \cite{Steinberg}.

We now recall three \emph{Steinberg relations} on the group $G(F)$, which provide the means for rewriting the points in $IwI$ as points in a double coset of the form $I^-vI$ for some $v\in W_{af}$. 

\begin{enumerate}
\item For any $\beta \in R_{af}$ and $c \in \CC^\times$,  we can write
\begin{equation}\label{E:MainFold}
x_{\beta}(c)n_\beta^{-1} = x_{-\beta}(c^{-1})x_\beta(-c)h_{\beta}(c).
\end{equation}
\item For any $\alpha, \beta \in R_{af}$ and $c \in \CC$, we have the conjugation relation
\begin{equation}\label{E:WConj}
n_\alpha x_\beta(c)n_\alpha^{-1} = x_{s_\alpha(\beta)}(\pm c),
\end{equation}
where the sign $\pm c$ is uniquely determined by the pair $\alpha$ and $\beta$.
\item If $b \in I$, since $bx_i(c)n_i^{-1} \in Is_iI$, then by \eqref{E:IwIpoints1} there exist unique $\tilde{c} \in \CC$ and $b' \in I$ such that
\begin{equation}\label{E:bMove}
bx_i(c)n_i^{-1} = x_i(\tilde{c})n_i^{-1}b'.
\end{equation}
\end{enumerate}
See Equations (7.6), (3.6), and (7.7) in \cite{PRS}, respectively, and the references therein, as well as \cite{BilligDyer}.

\subsection{Labeled folded alcove walks and affine Richardson varieties}

We now define the relevant set of alcove walks to identify points in open affine Richardson varieties in Theorem \ref{thm:AlcoveWalks} below.
\begin{definition}
Let $w,v \in W_{af}$.  Denote by $\mathcal{P}(\w, v)$ the set of all minimal, labeled alcove walks of type $\vec{w}$ which are positively folded and end in alcove $\mathbf{v} = v\base_0$. 
\end{definition}
Note that $\mathcal{P}(\w, v)$ restricts to the set of minimal alcove walks of type $\w$, in contrast with previous sections which did not require the alcove walk to be minimal.

We are now ready to state and prove our final theorem, which gives an alcove walk interpretation for the points in an open affine Richardson variety. 
\begin{theorem}\label{thm:AlcoveWalks}
Let $w,v \in W_{af}$.  Then there is a bijection 
$$ (IwI \cap I^-vI)/I  \quad \longleftrightarrow \quad \mathcal{P}(\vec{w},v).$$  
\end{theorem}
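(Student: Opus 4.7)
The plan is to construct the bijection using the explicit parameterization of $IwI$ in \eqref{E:IwIpoints1}, in the spirit of \cite{PRS}. Fix a reduced expression $w = s_{i_1}\cdots s_{i_\ell}$. Any coset $gI \in IwI/I$ has a unique representation $g = x_{i_1}(c_1) n_{i_1}^{-1} \cdots x_{i_\ell}(c_\ell) n_{i_\ell}^{-1}$ for some $c_1, \dots, c_\ell \in \CC$, and I will use these scalars as the labels on the steps of an alcove walk $\gamma$ of type $\vec{w}$, deciding at each step whether the walk crosses or folds according to the value of $c_j$ together with the current position of the walk.

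More precisely, I build $\gamma$ step by step from $\base_0$. At step $j$, the current alcove is $\base_{j-1}$, and the panel of type $i_j$ at $\base_{j-1}$ lies on a unique affine hyperplane $H_{\beta_j + k_j \delta}$. Declare step $j$ to be a positive fold with label $c_j \in \CC^\times$ if and only if $c_j \neq 0$ and $\base_{j-1}$ lies on the positive side of $H_{\beta_j + k_j \delta}$; otherwise declare step $j$ to be a crossing with label $c_j \in \CC$. Under this assignment, a nonzero $c_j$ with $\base_{j-1}$ on the negative side is forced to be a positive crossing (moving back toward $\base_0$), while $c_j = 0$ always produces a crossing, negative when $\base_{j-1}$ is on the positive side and positive otherwise. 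The resulting walk is automatically positively folded, and its labels obey the conventions of Section \ref{sec:positive}. Let $v_\gamma \in W_{af}$ denote the product, in order, of the simple reflections at the crossing steps; then $\gamma$ ends at $v_\gamma \base_0$.

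To identify $v_\gamma$ with $v$, I normalize $g$ using the Steinberg relations. At each step $j$ declared a fold, apply \eqref{E:MainFold} to rewrite $x_{i_j}(c_j) n_{i_j}^{-1} = x_{-i_j}(c_j^{-1}) x_{i_j}(-c_j) h_{i_j}(c_j)$; then use \eqref{E:WConj} and \eqref{E:bMove} to push all $I$-valued factors to the right and conjugate the $I^-$-valued factor $x_{-i_j}(c_j^{-1})$ back through the preceding crossings. This rearrangement produces an expression $g = u \cdot n_{v_\gamma}^{-1} \cdot b$ with $u \in I^-$ and $b \in I$, witnessing $gI \in I^- v_\gamma I / I$. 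Conversely, any $\gamma \in \mathcal{P}(\vec{w}, v)$ furnishes a parameter tuple $(c_1, \dots, c_\ell) \in \CC^\ell$ and hence a coset in $IwI/I$, which by the preceding normalization lies in $I^-vI/I$. The uniqueness in \eqref{E:IwIpoints1} then makes the two assignments mutually inverse.

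The main obstacle will be confirming that, throughout the normalization, every $I^-$-valued factor stays in $I^-$ after being conjugated past the intervening crossings. Each conjugation $n_{i_k}^{-1} x_\alpha(c) n_{i_k} = x_{s_{i_k}(\alpha)}(\pm c)$ supplied by \eqref{E:WConj} must carry a negative affine root $\alpha$ to another negative affine root. The positivity of the fold producing $\alpha$ guarantees $\alpha$ is already negative the moment it is introduced, and the crossings of a positively folded minimal walk of type $\vec{w}$ correspond to a reduced subexpression for $v_\gamma$, whose simple reflections preserve the negativity of the relevant roots via the inversion description \eqref{E:betaj}. This sign-tracking is the technical heart of the argument, already carried out in \cite{PRS}, which I adapt here to the conventions of Section \ref{sec:positive}.
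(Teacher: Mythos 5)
Your proposal shares its raw ingredients with the paper's proof (the parameterization \eqref{E:IwIpoints1}, the Steinberg relations \eqref{E:MainFold}--\eqref{E:bMove}, and the same fold/cross/label conventions), but the central step of your construction is wrong: you decide whether step $j$ folds or crosses according to whether the \emph{raw} parameter $c_j$ from \eqref{E:IwIpoints1} vanishes. The correct decision must be made using the \emph{corrected} parameter $\tilde{c}_j$ produced by \eqref{E:bMove} after the Iwahori element $b \in I$ accumulated from normalizing steps $1,\dots,j-1$ has been pushed through; this is exactly why the paper's proof is an induction on $\ell(w)$, carrying along the normalized form $x_{\beta_1}(d_1)\cdots x_{\beta_\ell}(d_\ell)n_v b$ at each stage. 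The map $c_j \mapsto \tilde{c}_j$ is in general an affine transformation of $\CC$ whose zero locus is \emph{not} $\{c_j = 0\}$, because $b$ can contain a root-subgroup factor in the direction $\alpha_{i_j}$ that shifts the parameter additively. Consequently your claimed normalization $g = u\, n_{v_\gamma}^{-1} b$ with $u \in I^-$ fails: pushing $I$-valued factors to the right across later steps changes those steps' parameters (that is the content of \eqref{E:bMove}), and can turn a nonzero parameter into zero and vice versa, which changes the fold/cross pattern and hence the Birkhoff stratum. The root-sign tracking you identify as the technical heart is not where the difficulty lies.

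Here is a concrete failure. Take affine $SL_2$ (type $\widetilde{A}_1$), $w = s_1s_0s_1$, and $c_1 = c_2 = c_3 = 1$, so $g = x_1(1)n_1^{-1}x_0(1)n_0^{-1}x_1(1)n_1^{-1}$. Since $\base_0$ lies on the positive side of every hyperplane and all $c_j \neq 0$, your rule folds at every step, producing a walk ending at $\base_0$, i.e.\ an element of $\mathcal{P}(\vec{w},1)$, which would force $g \in I^- 1 I$. However, normalizing the first two steps gives $x_1(1)n_1^{-1}x_0(1)n_0^{-1} = x_{-\alpha_1}(1)\,x_{-\alpha_0}(1)\, b_2$ with $b_2 \in I$, and then \eqref{E:bMove} yields $b_2\, x_1(c_3)\, n_1^{-1} = x_1(\tilde{c}_3)\, n_1^{-1} b'$ with
\begin{equation*}
\tilde{c}_3 \;=\; c_1^{2}c_2^{-2}c_3 - c_1 \;=\; 0 ,
\end{equation*}
so the third step is in fact a \emph{negative crossing}: $g = x_{-\alpha_1}(1)x_{-\alpha_0}(1)\, n_1^{-1} b' \in I^- s_1 I$. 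Thus your assignment sends a point of $(IwI \cap I^-s_1I)/I$ into $\mathcal{P}(\vec{w},1)$, so the two maps you define are not mutually inverse on the strata the theorem concerns; by the disjointness of the Birkhoff decomposition \eqref{eq:bruhat} this cannot be repaired by adjusting labels alone. The fix is the paper's left-to-right induction, in which both the fold/cross decision and the label at step $j$ (namely $\pm\tilde{c}_j$ for a crossing, $\pm\tilde{c}_j^{-1}$ for a fold) are read off from $\tilde{c}_j$ rather than from $c_j$.
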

The proof of Theorem \ref{thm:AlcoveWalks} follows \cite[Theorem 7.1]{PRS}, which treats the context of unipotent orbits and periodic orientations on the hyperplanes.  This proof of Theorem \ref{thm:AlcoveWalks} inspired the argument for the more general  \cite[Theorem 5.10]{MNST}. A similar statement which requires alternate techniques is \cite[Theorem 1.2]{MST-IIG}.

\begin{proof}
The proof proceeds by induction on the length of $w$.  For the base case, suppose that $\ell(w) = 1$ so that $w = s_j$ for some $j \in I_{af}$.   By \eqref{E:IwIpoints1}, we can write $Is_jI = \{ x_j(c)n_j^{-1}I \mid c \in \CC\}$. 
 If $c=0$, then 
\begin{equation*}
x_j(c)n_j^{-1}I = x_{\alpha_j}(0)n_j^{-1}I = x_{-\alpha_j}(0)n_j^{-1}I \in I^-s_jI.
\end{equation*}
In particular, $Is_jI \subseteq I^-s_jI$, and so $Is_jI \cap I^- v I \neq \emptyset$ if and only if $v=s_j$ by the affine Bruhat decomposition \eqref{eq:bruhat}.
Moreover, $(Is_jI \cap I^- s_jI)/I$ contains a single point in $G/I$, which is in natural bijection with the unique alcove walk $\base_0 \to \mathbf{s}_j$ of type $s_j$ having a single negative crossing at $H_{\alpha_j}$ labeled by $c=0$.

If instead $c \neq 0$, then we apply \eqref{E:MainFold} to write 
\begin{equation*}
x_j(c)n_j^{-1}I = x_{-\alpha_j}(c^{-1})x_{\alpha_j}(-c)h_{\alpha_j}(c)I = x_{-\alpha_j}(c^{-1})I \in I^- 1 I,
\end{equation*}
and so $Is_jI \cap I^- v I \neq \emptyset$ if and only if $v=1$.  Moreover, the points in $(Is_jI \cap I^- 1 I)/I$ are parameterized by the element $c^{-1} \in \CC^{\times}$, which naturally corresponds to the folded alcove walk $\base_0 \to \base_0$ of type $s_j$ having a single positive fold at $H_{\alpha_j}$ labeled by the element $c^{-1} \in \CC^{\times}$.  Altogether, we have thus shown that $(Is_jI \cap I^-vI)/I  \longleftrightarrow \mathcal{P}(\vec{s}_j,v)$, where these sets are nonempty if and only if $v \in \{ 1, s_j\}$, establishing the base case.

For the inductive step, suppose that $\ell(w) >1$, and choose a reduced expression $w = s_{i_1} \cdots s_{i_\ell}$. By the inductive hypothesis, assume that we have a bijection $(IwI \cap I^-vI)/I  \longleftrightarrow \mathcal{P}(\vec{w},v)$.  For each $v \in W_{af}$ such that these sets are nonempty, we may use \eqref{E:IwIpoints1} and \eqref{E:IwIpoints2} to write
\begin{equation}\label{E:GP3eq1}
x_{i_1}(c_1)n_{i_1}^{-1} \cdots x_{i_\ell}(c_\ell)n_{i_\ell}^{-1} = x_{\beta_1}(d_1)\cdots x_{\beta_\ell}(d_\ell)n_vb \in I^- v I
\end{equation}
for some $n_v = n_{j_1}^{-1}\cdots n_{j_k}^{-1}$ with $v = s_{j_1} \cdots s_{j_k}$ and $\beta_1, \dots, \beta_\ell \in R^-_{af}$. 

Now for any $j \in I_{af}$ such that $\ell(ws_j) > \ell(w)$, we must show that  
\[ (Iws_jI \cap I^-yI)/I  \longleftrightarrow \mathcal{P}(\vec{ws}_j,y).\]
Since $ws_j = s_{i_1} \cdots s_{i_\ell}s_j$ is reduced, all elements of $\mathcal{P}(\vec{ws}_j,y)$ can be obtained by concatenating each element of $\mathcal{P}(\vec{w},v)$ with one final crossing $\textbf{v} \to \textbf{vs}_j$, and then considering all possible folds and labels applicable to this additional crossing. 
 On the other hand, the points in $Iws_jI \cap I^-yI$ are parameterized by right multiplying by the element $x_j(c)n_j^{-1}$ in expression \eqref{E:GP3eq1}.  Using \eqref{E:bMove}, we then write
\begin{align}
x_{i_1}(c_1)n_{i_1}^{-1} \cdots x_{i_\ell}(c_\ell)n_{i_\ell}^{-1} x_j(c)n_j^{-1} & = x_{\beta_1}(d_1)\cdots x_{\beta_\ell}(d_\ell)n_vbx_j(c)n_j^{-1}  \nonumber \\
& =  x_{\beta_1}(d_1)\cdots x_{\beta_\ell}(d_\ell)n_vx_j(\tilde{c})n_j^{-1}b' \label{E:indstep}
\end{align}
for unique $\tilde{c} \in \CC$ and $b' \in I$.  We must now consider several cases.
 
 (1) First suppose that the crossing $\textbf{v} \to \textbf{vs}_j$ is positive.
Use \eqref{E:WConj} to rewrite \eqref{E:indstep} as
\begin{align*}
x_{\beta_1}(d_1)\cdots x_{\beta_\ell}(d_\ell)n_vx_j(\tilde{c})n_j^{-1}b' & = x_{\beta_1}(d_1)\cdots x_{\beta_\ell}(d_\ell)n_vx_j(\tilde{c})n_v^{-1}n_vn_j^{-1}b' \\
& = x_{\beta_1}(d_1)\cdots x_{\beta_\ell}(d_\ell)x_{v\alpha_j}(\pm \tilde{c})n_vn_j^{-1}b' \\
& = x_{\beta_1}(d_1)\cdots x_{\beta_\ell}(d_\ell)x_{v\alpha_j}(\pm \tilde{c})n_{vs_j}b'.
\end{align*}
 Since $\textbf{v} \to \textbf{vs}_j$ is positive, then  $v\alpha_j \in R^-_{af}$ by \eqref{E:HRootLabelsI-}, and so $x_{\beta_1}(d_1)\cdots x_{\beta_\ell}(d_\ell)x_{v\alpha_j}(\pm \tilde{c}) \linebreak \in I^-$. 
  In particular, $I ws_j I \cap I^- y I \neq \emptyset$ if and only if $y = vs_j$ in this case, and the points in $(I ws_j I \cap I^- vs_j I)/I $ are parameterized by the element $\pm \tilde{c} \in \CC$. The points in this intersection are thus in natural bijection with those labeled folded alcove walks from $\base_0$ to $\mathbf{vs}_j$ of type $\vec{ws}_j$, obtained from the corresponding element of $\mathcal{P}(\vec{w}, v)$ by concatenating a final positive crossing at $H_{v\alpha_j}$ labeled by the element $\pm \tilde{c}$.

 (2) Now suppose that the crossing $\textbf{v} \to \textbf{vs}_j$ is negative, and further suppose that $\tilde{c} \neq 0$.  Since $\tilde{c} \neq 0$, we can use \eqref{E:MainFold} and \eqref{E:WConj} to rewrite \eqref{E:indstep} as
\begin{align*}
x_{\beta_1}(d_1)\cdots x_{\beta_\ell}(d_\ell)n_vx_j(\tilde{c})n_j^{-1}b' & = 
x_{\beta_1}(d_1)\cdots x_{\beta_\ell}(d_\ell)n_vx_{-\alpha_j}(\tilde{c}^{-1})x_{\alpha_j}(-\tilde{c})h_{\alpha_j}(\tilde{c})b'  
\\
& = x_{\beta_1}(d_1)\cdots x_{\beta_\ell}(d_\ell)n_vx_{-\alpha_j}(\tilde{c}^{-1})b'' \\
& = x_{\beta_1}(d_1)\cdots x_{\beta_\ell}(d_\ell)n_vx_{-\alpha_j}(\tilde{c}^{-1})n_v^{-1}n_vb'' \\
& = x_{\beta_1}(d_1)\cdots x_{\beta_\ell}(d_\ell)x_{-v\alpha_j}(\pm \tilde{c}^{-1})n_vb''
\end{align*}
for some $b'' \in I$.  Since $\textbf{v} \to \textbf{vs}_j$ is negative, then  $v\alpha_j \in R^+_{af}$ by \eqref{E:HRootLabelsI},  in which case $-v\alpha_j \in R^-_{af}$.  Therefore, $x_{\beta_1}(d_1)\cdots x_{\beta_\ell}(d_\ell)x_{-v\alpha_j}(\pm \tilde{c}^{-1}) \in I^-$. 
 In particular, $Iws_jI \cap I^- y I \neq \emptyset$ if and only if $y = v$ in this case, and the points in $( Iws_jI \cap I^- v I)/I$ are parameterized by the element $\pm \tilde{c}^{-1} \in \CC^{\times}$.  The points in this intersection are thus in natural bijection with those labeled folded alcove walks from $\base_0$ to $\mathbf{v}$ of type $\vec{ws}_j$, obtained from the corresponding element of $\mathcal{P}(\vec{w}, v)$ by folding the final negative crossing at $H_{v\alpha_j}$ labeled by the element $\pm \tilde{c}^{-1}$.

(3) Now suppose that the crossing $\textbf{v} \to \textbf{vs}_j$ is negative, and finally suppose that $\tilde{c} = 0$.  Use \eqref{E:WConj}  to rewrite \eqref{E:indstep} as
\begin{align*}
x_{\beta_1}(d_1)\cdots x_{\beta_\ell}(d_\ell)n_vx_{\alpha_j}(0)n_j^{-1}b' & = 
x_{\beta_1}(d_1)\cdots x_{\beta_\ell}(d_\ell)n_vx_{-\alpha_j}(0)n_j^{-1}b'\\
& = x_{\beta_1}(d_1)\cdots x_{\beta_\ell}(d_\ell)n_vx_{-\alpha_j}(0)n_v^{-1}n_vn_j^{-1}b'\\
& = x_{\beta_1}(d_1)\cdots x_{\beta_\ell}(d_\ell)x_{-v\alpha_j}(0)n_vn_j^{-1}b'\\
& = x_{\beta_1}(d_1)\cdots x_{\beta_\ell}(d_\ell)x_{-v\alpha_j}(0)n_{vs_j}b'.
\end{align*}
As in the previous case, $x_{\beta_1}(d_1)\cdots x_{\beta_\ell}(d_\ell)x_{-v\alpha_j}(0) \in I^-$, and  $Iws_jI \cap I^-yI \neq \emptyset$ if and only if $y = vs_j$ in this case.  The points in $(Iws_jI \cap I^-vs_jI)/I$ are then clearly in bijection with those labeled folded alcove walks from $\base_0$ to $\mathbf{vs}_j$ of type $\vec{ws}_j$, obtained from the corresponding element of $\mathcal{P}(\vec{w},v)$ by concatenating a final negative crossing at $H_{v\alpha_j}$ labeled by $\tilde{c}=0$.

Altogether, we have shown that $(I ws_j I \cap I^- y I)/I \longleftrightarrow \mathcal{P}(\vec{ws}_j,y)$, where these sets are nonempty if and only if $y \in \{ v, vs_j\}$, and the result follows by induction.
\end{proof}

\subsection{Labeled folded alcove walks and GKM theory}

In this final section, we explain how the alcove walks indexing the summands of the localizations $\psi^v(w)$ and their corresponding masks can be used to produce folded alcove walks.

\begin{definition}
Let $\gamma = (\base_0, \base_1, \dots, \base_m)$ be an alcove walk of $\type(\gamma) = s_{i_1}\cdots s_{i_m}$. We can \emph{fold} $\gamma$ at step $j$ by defining a new alcove walk
\[ \gamma' = (\base_0, \dots, \base_{j-1}, \base'_j, \dots, \base'_m), \]
where $\base'_k = s_{i_{1}}\cdots \widehat{s_{i_j}} \cdots s_{i_{k}}\base_0$ for all $k$ such that $j \leq k \leq m$.  Equivalently, to obtain $\gamma'$, we repeat the alcove $\base'_{j} = \base_{j-1}$, and the entire walk $\gamma$ after step $j$ is then reflected across the hyperplane $H_{\beta_j+k_j\delta}$ crossed by $\gamma$ at step $j$. 
\end{definition}

The process of folding alcove walks can be iterated as follows.
\begin{definition}\label{def:foldedimage}
Let $\gamma = (\base_0, \base_1, \dots, \base_m)$ be a minimal alcove walk.  Let $\varepsilon \in \{0,1\}^m$ be a mask, and record the zero entries hiding crossings of $\gamma$ as $\{\varepsilon_{j_1}, \dots, \varepsilon_{j_k}\}$ where $j_1 < \cdots < j_k$.  First fold $\gamma$ at step $j_1$ to obtain $\gamma_1$.  Repeat this process for all $\ell$ such that $2 \leq \ell \leq k$ by folding $\gamma_{\ell-1}$ at step $j_{\ell}$ to obtain $\gamma_{\ell}$.  The resulting folded alcove walk $\gamma_k$ is called the \emph{$\varepsilon$-folded image of $\gamma$}, denoted $\gamma_\varepsilon$.
\end{definition} 

The folded alcove walks we have seen in this section illustrate this construction.

\begin{example0}\label{ex:foldwalks}
The folded walks in Figures \ref{fig:fold1} and \ref{fig:fold2} are the $\varepsilon$-folded images of the green walk $\gamma$ from Figure \ref{fig:braid}, with respect to the masks $\varepsilon = (0,0,1,1)$ and $\varepsilon = (1,0,0,1)$, respectively.  
The walk in Figure \ref{fig:fold1} is positively folded, as both the pink and orange folds occur on hyperplanes facing the base alcove $\base_0$. However, the walk in \ref{fig:fold2} is \emph{not} positively folded, as the orange fold occurs on the negative side of $H_{\alpha_1}$.
\end{example0}

While the folded alcove walk in Figure \ref{fig:fold2} is not positive with respect to the orientation defined in Section \ref{sec:positive}, it is positively folded with respect to the \emph{trivial positive} orientation from \cite{GraeberSchwer}, which labels both sides of every affine hyperplane by $+$.  Interestingly, by Proposition 6.8 of \cite{GraeberSchwer} (see also Lemma 2.2.1 in \cite{BjornerBrenti}), the alcove walk in Figure \ref{fig:fold2}, which is only positively folded with respect to the trivial positive orientation, can be algorithmically exchanged for the walk in Figure \ref{fig:fold1}, which is an honest element of $\mathcal{P}(\w, v)$ for $\w = s_{1210}$ and $v = s_{10}$.

In general, all masks $\varepsilon$ indexing summands of $\psi^v(w)$ have $\varepsilon$-folded images which are positively folded with respect to the trivial positive orientation, but only some masks give rise to elements of $\mathcal{P}(\w,v)$ which are positively folded with respect to the orientation used to establish Theorem \ref{thm:AlcoveWalks}.  We invite the interested reader to explore the precise characterization of the subset of masks whose folded images correspond to the elements of $\mathcal{P}(\w,v)$.


\newcommand{\etalchar}[1]{$^{#1}$}

\end{document}